\documentclass[10pt,psamsfonts]{amsart}
\usepackage{amsmath}
\usepackage{amsthm}
\usepackage{amssymb}
\usepackage{amscd}
\usepackage{amsfonts}
\usepackage{amsbsy}
\usepackage{graphicx}
\usepackage[dvips]{psfrag}
\usepackage{array}
\usepackage{color}
\usepackage{epsfig}
\usepackage{url}
\usepackage{verbatim}
\usepackage{overpic}

\newcolumntype{L}{>{\displaystyle}l}
\newcolumntype{C}{>{\displaystyle}c}
\newcolumntype{R}{>{\displaystyle}r}

\newcommand{\R}{\ensuremath{\mathbb{R}}}

\newcommand{\Z}{\ensuremath{\mathbb{Z}}}

\newcommand{\CO}{\ensuremath{\mathcal{O}}}

\newcommand{\sgn}{\mathrm{sign}}

\def\e{\varepsilon}

\newtheorem {theorem} {Theorem}

\newtheorem {proposition}[theorem]{Proposition}
\newtheorem {corollary}[theorem] {Corollary}
\newtheorem {lemma}[theorem]{Lemma}

\newtheorem {remark} {Remark}

\newtheorem {mtheorem} {Theorem}

\textwidth=14truecm

\parskip 0.2cm

\begin{document}
\renewcommand{\arraystretch}{1.5}

\title[Simultaneous sliding  and crossing limit cycles]
{Simultaneous occurrence of sliding  and crossing limit cycles in
piecewise linear planar vector fields}

\author[J. L. Cardoso, J. Llibre, D. D. Novaes and D. J. Tonon]
{Jo\~ao L. Cardoso$^1$, Jaume Llibre$^2$, Douglas D. Novaes$^3$  and
Durval J. Tonon$^1$}

\address{$^1$ Institute of Mathematics and Statistics of Federal
University of Goi\'{a}s, Avenida Esperan\c{c}a s/n, Campus Samambaia,
74690-900, Goi\^{a}nia, Goi\'{a}s, Brazil}
\email{joao.lopes@ifg.edu.br} \email{djtonon@ufg.br}

\address{$^2$ Departament de Matematiques, Universitat Aut\`{o}noma
de Barcelona, 08193 Bellaterra, Barcelona, Catalonia, Spain}
\email{jllibre@mat.uab.cat}

\address{$^3$ Departamento de Matematica, Universidade
Estadual de Campinas, Rua S\'{e}rgio Buarque de Holanda, 651, Cidade
Universit\'{a}ria, 13083-859, Campinas, S\~{a}o Paulo, Brazil}
\email{ddnovaes@ime.unicamp.br}

\subjclass[2010]{34C05, 34C07, 37G15}

\keywords{nonsmooth differential system, Filippov systems, piecewise
linear differential system, crossing limit cycles, sliding limit cycles}

\maketitle

\begin{abstract}
In the present study we consider planar piecewise linear vector fields with two zones separated by the straight line $x=0$. Our goal is to study the existence of simultaneous crossing and sliding limit cycles for such a class of vector fields. First, we provide a canonical form for these systems assuming that each linear system has center, a real one for $y<0$ and a virtual one for $y>0$, and such that the real center is a global center. Then, working with a first order piecewise linear perturbation we obtain piecewise linear differential systems with three crossing limit cycles. Second, we see that a sliding cycle can be detected after a second order piecewise linear perturbation. Finally, imposing the existence of a sliding limit cycle we prove that only one additional crossing limit cycle can appear. Furthermore, we also characterize the stability of the higher amplitude limit cycle and of the infinity. The main techniques used in our proofs are the Melnikov method, the Extended Chebyshev systems with positive accuracy, and the Bendixson transformation.
\end{abstract}


\section{Introduction and statement of the main results}

For a given differential system a limit cycle is a periodic orbit
isolated in the set of all periodic orbits of the system. One of the
main problems of the qualitative theory of planar differential
systems is determining the existence of limit cycles. A {\it center}
is a singular point $p$  that possesses a neighborhood $U$ such that
$U\setminus\{p\}$ is filled by periodic solutions. A classical way to
produce and study limit cycles is by perturbing the periodic
solutions of a center. This problem has been studied intensively for
continuous planar differential systems, see for instance, \cite{CL}
and the references therein.

In this paper, we are concerned in limit cycles bifurcating from a center of  discontinuous piecewise linear
differential systems with two zones separated by the straight line $x
= 0,$ when the center is perturbed inside the class of all
discontinuous piecewise linear differential systems with two zones
separated by $x = 0$. 

The study of the piecewise linear differential systems goes back to
Andronov and co-authors \cite{AVK}. In the present days these systems
continue receiving a considerable attention, mainly due to their
applications. Indeed, such systems are widely used to model many real
processes and different modern devices, see for instance the book
\cite{dBBCKN} and the references therein.

The case of {\it continuous piecewise linear systems} with two
regions separated by a straight line is the simplest possible
configuration of piecewise linear systems. We note that even in this
simple case, establishing the existence of at most one limit cycles
was a difficult task, see \cite{FPRT}, and \cite{LOP} for a shorter
proof. There are two reason for that misleading simplicity of
piecewise linear systems: first, whereas it is easy to compute the
solutions in any linear region, the time that each orbit requires to
pass from one linear region to the other is usually not computable;
second, the increasing number of parameters.

{\it Discontinuous piecewise linear systems}  with two regions
separated by a straight line have received a lot of attention during
the last years, see for instance \cite{FPRTb,HZ, HY, LNT, LNTb, LP,N}
among other papers. In \cite{HZ},  the authors conjectured that
piecewise linear systems with two regions separated by a straight
line could have at most  two crossing limit cycles. Later on in \cite{HY}, the
authors provided numerical evidence on the existence of three crossing limit
cycles, which was analytically proved in \cite{LP}. Sufficient condition on piecewise linear system implying the existence of at most 3 crossing limit cycles can be found in \cite{FPRTb,LNTb,N}.  As far as we know, there are no examples of piecewise linear vector fields separated by a straight line with more than 3 crossing limit cycles. In fact, although there is no proof, it is common sense that 3 is very likely the upper bound in this case. It is worthwhile to mention that the shape of the discontinuity set plays an important role in the number of crossing limit cycles. Indeed, if the discontinuity set is not a straight line, then one may find an arbitrary number of crossing limit cycles (see \cite{NP}).

Notice that all the above discussion about the number of limit cycles for piecewise linear systems is only concerned with crossing limit cycles. Thus, it is natural to inquire what happens when one also considers  sliding limit cycles. In this way, some natural questions arise on the coexistence of sliding and crossing limit cycles: Is it possible to have more than 3 limit cycles when sliding limit cycles are allowed? In other words, is it possible to find examples with 3 crossing limit cycles and one additional sliding limit cycle? 

Accordingly, in the present paper, we aim to investigate the simultaneous existence of sliding limit cycles and how they change the upper bound for the number of limit cycles for piecewise linear vector fields with two zones separated by the straight line $x=0$. Firstly, we provide a canonical form for a class of these systems assuming that each
linear system has a center, a real one for $y<0$ and a virtual one for
$y>0$, such that the real center is a global center. That is, the period annulus of the real center is the whole plane but the singularity. Then, proceeding
with a first order piecewise linear  perturbation, we study the
bifurcation of crossing limit cycles. In short, it is shown that any
configuration of $1$, $2$ or $3$ crossing limit cycles can bifurcate from the
periodic solutions of the unperturbed system.  We also study the stability of the higher amplitude crossing limit cycle and of the infinity. Secondly, we see that a sliding cycle can be detected after
a second order piecewise linear perturbation. Finally, imposing the
existence of a sliding limit cycle we prove that only one additional
crossing limit cycle can appear. Therefore, we conclude that at a second order analysis and for this particular class of piecewise linear vector fields, the sliding limit cycles do not increase the upper bound for the number of limit cycles and, additionally, a sliding limit cycle coexists with at most one crossing limit cycle.

\subsection{Setting the problem}

Consider the following planar piecewise linear differential systems
with two zones separated by the straight line $\Sigma=\{(x,y);x=0\}$
\begin{equation}\label{us1}
\left(\begin{array}{c}
\dot x\\
\dot y
\end{array}\right)
=W_{\e}(x,y)=\left\{\begin{array}{lc} M^+_{\e}\left(\begin{array}{c}
x\\
y
\end{array}\right) + \left(\begin{array}{c}
u_{1,\e}^+\\
u_{2,\e}^+
\end{array}\right) & \textrm{if} \quad x\geq 0, \vspace{0.2cm}\\
M^-_{\e}\left(\begin{array}{c}
x\\
y
\end{array}\right)+\left(\begin{array}{c}
u_{1,\e}^-\\
u_{2,\e}^-
\end{array}\right) & \textrm{if} \quad x\leq 0,
\end{array}\right.
\end{equation}
where  $M^{\pm}_{\e}$ is one-parameter family of $2\times 2$ real
matrices and $u_{1,\e}^{\pm},u_{2,\e}^{\pm}\in \R$. Here, the dot
denotes derivative with respect to the time variable $t$. We define
$\Sigma^+=\{(x,y); x >0\}$ and $\Sigma^-=\{(x,y); x <0\}$ and denote
the piecewise linear vector field associated to system \eqref{us1} by
$W_{\e}=(W_{\e}^+,W_{\e}^-)$, where $W_{\e}^+$ and $W_{\e}^-$ are
defined in $\Sigma^+$ and $\Sigma^-$, respectively.

Let $p^{\pm}$ be the singular points of $W_0^{\pm}$. We say that a
singular point $p^{\pm}$ of $W_0^{\pm}$ is {\it real} (respectively
{\it virtual}) if $p^{+}\in\left\lbrace (x,y); x\geq 0\right\rbrace $
or $p^{-}\in\left\lbrace (x,y); x\leq 0\right\rbrace$ (respectively
if $p^{+}\in\left\lbrace (x,y); x\leq 0\right\rbrace $ or
$p^{-}\in\left\lbrace (x,y); x\geq 0\right\rbrace$). Assume that
system \eqref{us1} satisfies
\begin{itemize}
\item[$(H_1)$] $p^-$ is a center for the system $W_0^-$ and $p^-\in
\Sigma^-$, i.e, $p^-$ is a real singular point.

\smallskip

\item[$(H_2)$] $p^+$ is a center for the system $W_0^+$ and $p^+
\in \Sigma^-$,  i.e, $p^+$ is a virtual singular point.

\smallskip

\item[$(H_3)$] $p^-$ is global center for the system $W_0$,
see Figure \ref{GraficoCentroDescontinuo}.
\end{itemize}

\begin{figure}[h]
\begin{center}
\begin{overpic}[width=5cm]{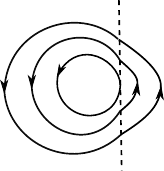}
\put(42,43){$p^{-}$}\put(49,48){$\bullet$}\put(60,95){$\Sigma$}
\end{overpic}
\end{center}
\caption{The global center for the system
$W_0$.}\label{GraficoCentroDescontinuo}
\end{figure}

\subsection{Canonical Form}

The next result provides a canonical form for the piecewise linear
vector fields \eqref{us1} having a center and it is proved in Section
\ref{s2}. Fixing $\e=0$, we denote
\[
M^{\pm}_0=\left(\begin{array}{cc}
m_{11}^{\pm}&m_{12}^{\pm}\\
m_{21}^{\pm}&-m_{11}^{\pm}\end{array}\right).
\]

\begin{proposition}\label{p1}
Consider the piecewise linear vector field $W_{\e}(x,y)$ given in
\eqref{us1} and assume that $W_\e(x,y)$ satisfies hypotheses $(H_1),
(H_2)$ and $(H_3).$  Then,   there exists a change of coordinates
$(t,x,y)\mapsto(\tilde t,x,\tilde y)$ where the piecewise linear
vector field \eqref{us1} can be written as $(x',\tilde y')^T =
Z_{\e}(x,\tilde y)$, where
\begin{equation*}\label{us}
Z_0(x,\tilde y) =\left\{\begin{array}{LC}Z_0^+(x, \tilde y)=
A^+\left(\begin{array}{c}
x\\
\tilde y
\end{array}\right) +\left(\begin{array}{c}
0\\
d
\end{array}\right) & \textrm{if} \quad x\geq 0, \vspace{0.2cm}\\
Z_0^-(x, \tilde y)=A^-\left(\begin{array}{c}
x\\
\tilde y
\end{array}\right) +\left(\begin{array}{c}
0\\
e
\end{array}\right) & \textrm{if} \quad x\leq 0,
\end{array}\right.
\end{equation*}
$A^+=\left(\begin{array}{cc}
a&b\\
c&-a\end{array}\right), A^-=\left(\begin{array}{cc}
0&-1\\
1&0\end{array}\right)$ with 
\[
\begin{array}{lll}
\rho  &=  &\sqrt{|\det(M_0^-)|}, \\ \\
a      &=  &\dfrac{1}{\rho}\Big(m_{11}^+ -\dfrac{m_{11}^- m_{12}^+}{m_{12}^-}\Big),\\ \\
b      &=   &-\dfrac{m_{12}^+}{m_{12}^-},\\ \\
c      &=    &\dfrac{1}{\rho^2}\left(\dfrac{(m_{11}^-)^{2}m_{12}^+}{m_{12}^-} -
2m_{11}^- m_{11}^+ - m_{12}^+ m_{21}^+\right),\\ \\
d     &=    &-\dfrac{m_{12}^+u_2^+}{\rho},\\ \\
e     &=   &-\dfrac{m_{12}^-u_2^-}{\rho}.
\end{array}
\]
Furthermore, the parameters of this canonical form satisfy
\begin{equation*}\label{cond-param-centro}
b<0, c>0, d>0, e>0, a^2+bc<0.
\end{equation*}
\end{proposition}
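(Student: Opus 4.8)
The plan is to produce the change of variables explicitly and to read off the canonical form together with its sign conditions. First I would record the algebraic content of $(H_1)$ and $(H_2)$: a planar linear system has a center only if its matrix has a pair of purely imaginary eigenvalues, hence zero trace and positive determinant. This is exactly what lets us write $M^\pm_0$ in the trace-zero form used in the statement, and it gives $\det M^-_0=\rho^2>0$, so that $\pm i\rho$ are the eigenvalues of the left block and $\rho$ is the frequency of the real center.

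Next I would use $(H_3)$ to normalize the affine parts. On $\Sigma=\{x=0\}$ the first component of $W_0^\pm$ equals $m^\pm_{12}y+u^\pm_1$, which vanishes at the contact point $y=-u^\pm_1/m^\pm_{12}$. If these two contact points were distinct, then on a subinterval of $\Sigma$ the two normal components would have opposite signs, creating a sliding segment; but a global center has the whole punctured plane as period annulus and so admits no sliding set. Hence the contacts coincide, and the absence of sliding at the remaining points of $\Sigma$ forces $m^+_{12}m^-_{12}>0$. A translation in the $y$-variable placing the common contact point (equivalently, the real center $p^-$) on the $x$-axis does not move $\Sigma$ and makes $u^\pm_1=0$.

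Now I would normalize the linear part of the left block. Looking for a matrix $T=\left(\begin{smallmatrix}1&0\\ \alpha&\beta\end{smallmatrix}\right)$ that fixes $x$, together with the rescaling $\tilde t=\rho t$, the requirement $\rho^{-1}TM^-_0T^{-1}=A^-$ determines $\alpha=-m^-_{11}/\rho$ and $\beta=-m^-_{12}/\rho$ by comparing the four entries. Applying the same $T$ and rescaling to the right block gives $A^+=\rho^{-1}TM^+_0T^{-1}$, and a direct multiplication yields the displayed entries $a,b,c$. Since $u^\pm_1=0$, the transformed affine terms are $\rho^{-1}Tu^\pm$, whose first components vanish and whose second components are the constants $e$ and $d$ after a routine computation.

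Finally I would check the inequalities. By $(H_2)$, $\det A^+=\det M^+_0/\rho^2>0$, i.e. $a^2+bc<0$; combined with $b=-m^+_{12}/m^-_{12}<0$ coming from the crossing condition, this gives $bc<0$ and therefore $c>0$. The left center sits at $x=-e$, so $(H_1)$ (real center, strictly in $\Sigma^-$) yields $e>0$; the right center sits at $x=-db/(a^2+bc)$, so $(H_2)$ (virtual center, in $\Sigma^-$) yields $d>0$. The main obstacle is the second step: converting the global statement $(H_3)$ into the two algebraic facts---coinciding contacts and same-side crossing---requires analyzing the Filippov contact structure on $\Sigma$ and excluding any sliding region; once this is done the remaining arguments are linear algebra and sign bookkeeping.
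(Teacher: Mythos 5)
Your proof is correct and follows essentially the same route as the paper: a lower--triangular linear map preserving the switching line, composed with the time rescaling by $\rho$, conjugates the left block to the rotation $A^-$, and the right block and the sign conditions are read off exactly as you do. In fact you are more careful than the paper, which silently writes the affine parts as $(0,u_2^{\pm})^T$ and merely asserts the inequalities, whereas you justify the reduction $u_1^{\pm}=0$ and the sign $b<0$ from the absence of a sliding segment forced by $(H_3)$. Two small points. First, with your normalization (which genuinely fixes $x$, as the statement of the proposition requires) the transformed affine term is $\rho^{-1}Tu^{\pm}=(0,-m_{12}^{\pm}u_2^{\pm}/\rho^{2})$, so your $d$ and $e$ differ from the stated formulas by the positive factor $1/\rho$; the paper arrives at $-m_{12}^{\pm}u_2^{\pm}/\rho$ because its composite change of variables also rescales $x$ by $\rho$, contradicting its own claim that $x$ is unchanged. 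This is harmless for the signs and for the existence of the canonical form, but it is worth flagging. Second, your parenthetical identifying the common contact point with ``the real center $p^-$ on the $x$-axis'' is not correct in general: the tangency locus $\{\dot x=0\}$ of $W_0^-$ is a line through $p^-$ that meets $\Sigma$ at the same height as $p^-$ only when $m_{11}^-=0$. Since your argument actually uses only the contact-point normalization $u_1^{\pm}=0$, nothing breaks, but the remark should be deleted or corrected.
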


In general we can write
\begin{equation}\label{p1s}
Z_{\e}(x,y) = Z_0(x,y)+ \e Z_1(x,y)+\e^2 Z_2(x,y)+\CO(\e^3)
\end{equation}
where
\[
Z_1(x,y) =\left\{ \begin{array}{ll} B^+\left(\begin{array}{c}
x\\
y
\end{array} \right) + v^+  & \textrm{if} \quad x\geq 0,\vspace{0.2cm}\\
B^-\left(\begin{array}{c}
x\\
y
\end{array}\right) + v^- & \textrm{if} \quad x\leq 0,
\end{array}\right.
\]
and
\[
Z_2(x,y)=\left\{\begin{array}{lc} C^+\left(\begin{array}{c}
x\\
y
\end{array}\right) + w^+ & \textrm{if} \quad x\geq 0,\vspace{0.2cm}\\
C^-\left(\begin{array}{c}
x\\
y
\end{array}\right) + w^- & \textrm{if} \quad x\leq 0,
\end{array}\right.
\]
with $B^\pm=\left(\begin{array}{cc}
b_{11}^\pm&b_{12}^\pm\\
b_{21}^\pm&b_{22}^\pm\end{array}\right)$, \,\,
$v^\pm=\left(\begin{array}{cc}
v_1^\pm\\
v_2^\pm
\end{array}\right)$, \,\,
$C^\pm=\left(\begin{array}{cc}
c_{11}^\pm&c_{12}^\pm\\
c_{21}^\pm&c_{22}^\pm
\end{array}\right)$, \,\,
$w^\pm=\left(\begin{array}{cc}
w_1^\pm\\
w_2^\pm
\end{array}\right)$.  We denote the piecewise smooth vector field
$Z_\e$ by $Z_\e=(Z_\e^+,Z_\e^-)$, where $Z_\e^{\pm}$ is defined in
$\Sigma^{\pm}$.

\subsection{Crossing limit cycles}

For the piecewise linear differential system \eqref{p1s} we obtain an
upper bound for the number of crossing limit cycles bifurcating from
the periodic solutions of the unperturbed system.
We define
\[
\begin{array}{lll}
K_{0} & = & \dfrac{2 d \left(\xi ^2 (2 v_1^- b-b e
(b_{11}^-+b_{22}^-)
+2 v_1^+)+b d (b_{11}^++b_{22}^+)\right)}{e \xi },\\ \\
K_{1} & = & b e \xi ^2 (b_{11}^-+b_{22}^-),\\ \\
K_{2} & = & -\dfrac{b d^2 (b_{11}^++b_{22}^+)}{e \xi },
\end{array}
\]
where $ a^2 + b c = -\xi^{2}$, with $\xi > 0$. Notice that $(K_0,K_1,K_2)$ is surjective when it is seen as a function of the parameters $b_{11}^{\pm},$ $b_{22}^{\pm},$ and $v_1^{\pm}$. In fact, observe that $K_0$ is a linear function of $v_1^-, K_2$ is a linear function of $b_{11}^-$ and $K_2$ is a linear function of $v_{11}^+$.

\begin{mtheorem}\label{t1}
Suppose $Z_{\e}(x,y)$ is a piecewise linear vector field with two
zones separated by the straight line $x=0$ such that hypotheses
$(H_1), (H_2)$ and $(H_3)$ are satisfied by $Z_0$ and $\e>0$. Without loss of
generality, assume that $Z_{\e}$ writes as \eqref{p1s}. Then, the
following statements hold.
\begin{itemize}
\item[$(a)$] There exist elections of the parameters $K_0,$ $K_1,$
and $K_2$ for which system $Z_{\e}$ has any configuration of $1$,
$2$, or $3$ crossing limit cycles taking into account their
multiplicity.

\smallskip

\item[$(b)$] The highest amplitude limit cycle, when it exists and is hyperbolic,
and the infinity have opposite stability. More precisely, the highest
amplitude limit cycle is asymptotically stable $($resp. unstable$)$ and the infinity
is an unstable  $($resp. stable$)$ periodic orbit provided that
$\xi(b_{11}^-+b_{22}^-)+b_{11}^++b_{22}^+<0$ $($resp.
$\xi(b_{11}^-+b_{22}^-)+b_{11}^++b_{22}^+>0)$.

\smallskip

\item[$(c)$] The lowest amplitude limit cycle, when it exists and is hyperbolic,
is asymptotically stable $($resp. unstable$)$  provided that $b_{11}^-+b_{22}^-<0$,
or $b_{11}^-+b_{22}^-=0$ and $b v_1^-+v_1^+>0$ $($resp.
$b_{11}^-+b_{22}^->0$, or $b_{11}^-+b_{22}^-=0$ and $b
v_1^-+v_1^+<0)$.
\end{itemize}
\end{mtheorem}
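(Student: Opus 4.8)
The plan is to apply the first-order Melnikov method for piecewise smooth vector fields to the period annulus formed by the closed orbits of the global center $Z_0$. I would first parametrize these periodic orbits by their intersection $h>0$ with the positive $y$-axis. Since $\mathrm{tr}\,A^+=\mathrm{tr}\,A^-=0$, the unperturbed flow is area preserving on each side of $\Sigma$, so no exponential integrating factor is needed and the first-order displacement of the full Poincar\'e return map reduces, up to a positive normalizing factor, to a sum of two line integrals
\[
M_1(h)=\int_{\gamma_h^+}\big(Z_0^+\wedge Z_1^+\big)\,dt+\int_{\gamma_h^-}\big(Z_0^-\wedge Z_1^-\big)\,dt,\qquad F\wedge G:=F_1G_2-F_2G_1,
\]
where $\gamma_h^\pm=\gamma_h\cap\overline{\Sigma^\pm}$. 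The simple zeros of $M_1$ in $h$ correspond to hyperbolic crossing limit cycles bifurcating from $\gamma_h$.

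To evaluate $M_1$, I would use that on $\Sigma^-$ the field $Z_0^-$ is the rigid rotation about $p^-=(-e,0)$, so $\gamma_h^-$ is a circular arc parametrized directly by the angle, while on $\Sigma^+$ the matrix $A^+$ has eigenvalues $\pm i\xi$, so after the linear change of variables conjugating $A^+$ to $\xi\bigl(\begin{smallmatrix}0&-1\\1&0\end{smallmatrix}\bigr)$ the arc $\gamma_h^+$ becomes an elliptic arc, again parametrized by an angle with a constant rescaling. Both integrands are polynomials in the coordinates times the constant entries of $B^\pm$ and $v^\pm$, so the integrals are elementary; the angular extents of the arcs, which depend on $h$ through the two crossing heights on $\Sigma$, introduce inverse-trigonometric terms. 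After clearing positive common factors, the equation $M_1(h)=0$ becomes equivalent to $K_0\,\psi_0(h)+K_1\,\psi_1(h)+K_2\,\psi_2(h)=0$ for explicit functions $\psi_0,\psi_1,\psi_2$ built from these pieces. Statement $(a)$ then follows by proving that $(\psi_0,\psi_1,\psi_2)$ is an Extended Chebyshev system with positive accuracy one on the relevant interval: this bounds the number of zeros of any nontrivial combination by $3$, and, together with the surjectivity of $(K_0,K_1,K_2)$ in the perturbation parameters noted before the theorem, it allows one to prescribe any configuration of $1$, $2$ or $3$ zeros counted with multiplicity.

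For $(b)$, I would analyze the stability of infinity by means of the Bendixson transformation, which sends a neighborhood of infinity to a neighborhood of the origin and turns the behavior at infinity into that of a finite singular point on the resulting discontinuity line. Tracking the divergences $\e(b_{11}^\pm+b_{22}^\pm)$ weighted by the time spent in each zone (frequency $\xi$ on the right, frequency $1$ on the left) shows that the asymptotic contraction/expansion rate is governed, up to a positive factor, by the sign of $\xi(b_{11}^-+b_{22}^-)+b_{11}^++b_{22}^+$, giving the stated dichotomy for infinity. Since no limit cycle lies in the unbounded region exterior to the highest amplitude one, a Poincar\'e--Bendixson argument forces that cycle and infinity to have opposite stabilities; hyperbolicity and the precise sign are confirmed by the sign of $M_1'$ at the corresponding simple zero.

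For $(c)$ I would expand $M_1(h)$ as $h\to 0^+$, near the center: the lowest order nonvanishing coefficient controls the stability of the innermost limit cycle, and a direct computation identifies it as a positive multiple of $b_{11}^-+b_{22}^-$, with the next coefficient a positive multiple of $b\,v_1^-+v_1^+$ when the first one vanishes, yielding exactly the stated conditions. The routine but lengthy part is the evaluation of the two arc integrals, especially the elliptic arc on $\Sigma^+$ after linearization; the genuine obstacle is the Chebyshev analysis, since an ordinary Extended Chebyshev system of three functions would allow only two zeros, so one must instead verify the extra unit of accuracy through the refined Wronskian and boundary criteria of the positive-accuracy theory in order to reach the sharp bound of three.
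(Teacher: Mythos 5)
Your overall strategy is the paper's: parametrize the period annulus by the crossing height, compute the first-order Melnikov function $M_1$, reduce its zero set to a linear combination governed by $K_0,K_1,K_2$, bound the zeros by a Chebyshev-type argument, and obtain the stabilities from the Bendixson transformation at infinity together with the limits of $M_1$ at the two ends of the annulus. Two execution choices differ. First, you compute $M_1$ as a sum of wedge-product line integrals $\int_{\gamma_h^{\pm}}(Z_0^{\pm}\wedge Z_1^{\pm})\,dt$, exploiting that $A^{\pm}$ are traceless; the paper instead solves both linear systems explicitly, expands the flight times $t_{l,\e}$, $t_{r,\e}$ in $\e$, and subtracts the resulting half-return maps. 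These are equivalent and lead to the same $\arccos$-type expression. Second, and more substantively, for the bound of three zeros you keep the natural three-function basis $(\psi_0,\psi_1,\psi_2)$ and invoke the positive-accuracy theory of \cite{DT}, correctly observing that an ordinary ET-system of three functions would only allow two zeros; the paper instead splits one of these functions so as to embed the three-parameter family into the span of \emph{four} functions $[f_0,f_1,f_2,f_3]$, verifies via four Wronskians that this larger set is an ET-system (on $s_0>1/\beta$, since $W_1=\beta^2s_0^2-1$ vanishes at $s_0=1/\beta$), and so gets the bound of three from the standard theory, delegating the realization of three zeros to an explicit example. Your route has the advantage that the positive-accuracy corollary, if its hypotheses hold, directly realizes every configuration of up to three zeros inside the actual three-dimensional span, which is what statement $(a)$ asserts; its cost is the delicate verification that the last Wronskian of $(\psi_0,\psi_1,\psi_2)$ has exactly one simple zero, which you flag but do not carry out. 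In both routes the Wronskian computations are the load-bearing step and remain to be done, so your plan is sound and, where it deviates from the paper, defensible.
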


Theorem \ref{t1} is proved in Subsection \ref{s3}.

We note that it is well known that piecewise linear differential
system with two regions
 separated by a straight line can exhibit $3$ crossing limit
cycles. The first numerical evidence of that was given in \cite{HY}
by Huan and Yang. Then, in \cite{LP}, Llibre and Ponce established an
analytical proof of the existence of  $3$ limit cycles. After that,
many authors have provided different examples of those systems having
$3$ limit cycles. For instance, in \cite{BPT} Buzzi et al., using a
seventh order piecewise linear perturbation, proved that 3 limit
cycles bifurcate from a linear center. In \cite{LNT}, Llibre et al.
proved that 3 limit cycles bifurcates from a piecewise linear center,
but using only a first order piecewise linear perturbation.  In the
present paper, Theorem \ref{t1} also guarantees the bifurcation of
$3$ crossing limit cycles after a first order perturbation. In the
next subsection we shall see that a second order perturbation allows
the study of sliding limit cycles.

\subsection{Sliding and escaping limit cycles}

We recall that a {\it sliding/escaping limit cycle} is a closed orbit
composed by segments of orbits of the sliding vector field $Z^s$ (see
Subsection \ref{subsecao-Filippov}) and $Z^+$ and/or $Z^-$. We must
mention that this kind of limit cycle has been considered before, see
for instance \cite{GP}. The main difference between sliding and escaping limit cycle is that, considering a initial condition on the sliding limit cycle, we have the uniqueness of trajectory for positive times and for escaping limit cycle we have uniqueness of trajectory for negative times. Moreover, the conclusions regarding escaping limit cycles can be obtained from the conclusions about sliding limit cycles just by reversing the arrow of time.

%

Consider a piecewise linear vector field $Z$ and assume that the
sliding/escaping region (see Subsection \ref{subsecao-Filippov}) is
bounded. A sliding/escaping limit cycle of $Z$ is a closed trajectory
composed by  trajectories of the sliding vector field $Z^s$, $Z^+,$
and/or $Z^-$. More precisely, we say that a sliding/escaping limit
cycle is: of {\it Type I} if it is composed by trajectories of $Z^s,$
and  $Z_{ \e}^+$ or $Z_{ \e}^-$; and of {\it  Type II} if it is
composed by trajectories of $Z^s$, $Z_{ \e}^+$ and $Z_{ \e}^-$, see
Figure \ref{GraficoSlidingCycleType}.

\begin{figure}[h]
\begin{center}
\begin{overpic}[width=5cm]{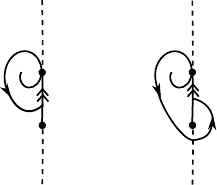}
\put(10,-7){Type I}\put(80,-7){Type
II}\put(10,80){$\Sigma$}\put(80,80){$\Sigma$}\put(21,50){$y_{f1}$}
\put(91,50){$y_{f1}$}\put(5,27){$y_{f2}$}\put(75,27){$y_{f2}$}
\end{overpic}
\end{center}
\caption{Types of sliding limit
cycles.}\label{GraficoSlidingCycleType}
\end{figure}

Since we are dealing with piecewise linear  vector fields, the
number of fold points for each vector field $Z^{+}_\e$ and $Z^{-}_\e$
is at most one. Therefore, the system can have at most one
sliding/escaping limit cycle.

In the following we denote $trace(C^-)$ by the trace of the matrix $C^-$ and by $\dfrac{(v_1^- b+v_1^+)^2}{b^2 e^2 \pi} = \Gamma$.

\begin{mtheorem}\label{t2}
Suppose $Z_{\e}(x,y)$ is a piecewise linear vector field with two
zones separated by the straight line $x=0$ such that hypotheses
$(H_1), (H_2)$ and $(H_3)$ are satisfied by $Z_0$. Without loss of
generality, assume that $Z_{\e}$ writes as \eqref{p1s}.
\begin{itemize}
\item [(1)] Additionally, suppose that $b_{11}^-=-b_{22}^-, b v_1^- +
v_1^+ < 0$ and $0<a<d+be$. Then, the following statements hold:
\begin{itemize}
\item[$(a)$]  If  $0<trace(C^-) < \dfrac{\Gamma}{2}$ then system $Z_{ \e}$ admits a sliding cycle of Type I.

\smallskip

\item[$(b)$] If  $\dfrac{\Gamma}{2} < trace(C^-) < 2\Gamma$ then system $Z_{
\e}$ admits a sliding cycle of Type II.
\end{itemize}

\smallskip

\item [(2)] Conversely, suppose that $b_{11}^-=-b_{22}^-, b v_1^-
+ v_1^+ > 0, a<0$ and $0<d+be$. Then, the following statements hold:
\begin{itemize}
\item[$(a)$]  If  $-\dfrac{\Gamma}{2} <
trace(C^-)< 0$ then system $Z_{ \e}$ admits a escaping cycle
of Type I.

\smallskip

\item[$(b)$] If  $trace(C^-) < - \dfrac{\Gamma}{2}$ then system $Z_{ \e}$ admits a escaping cycle of Type
II.
\end{itemize}

\item[(3)] Finally, suppose that $b_{11}^-\neq-b_{22}^-$. Then, $Z_{ \e}$ does not admit sliding limit cycle.
\end{itemize}
\end{mtheorem}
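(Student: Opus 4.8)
The plan is to analyze the sliding/escaping dynamics explicitly using the piecewise linear structure, reducing the existence of sliding/escaping cycles to a one-dimensional return-map problem on the sliding segment $\Sigma^s$. First I would carefully set up the Filippov formalism of Subsection \ref{subsecao-Filippov}: compute the fold points of $Z_\e^+$ and $Z_\e^-$ on $\Sigma$, determine the endpoints $y_{f1}$ and $y_{f2}$ of the sliding/escaping region, and write the sliding vector field $Z^s$ along $\Sigma$ in coordinates. Since each $Z_\e^\pm$ is affine, the sliding vector field is a rational (indeed, after clearing denominators, explicitly integrable) scalar ODE on the $y$-axis, so its flow and the first-return dynamics are fully computable in terms of the canonical parameters and the perturbation entries $b_{ij}^\pm$, $c_{ij}^\pm$, $v_i^\pm$.

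The core of the argument is a balance/displacement computation. A Type I or Type II cycle exists precisely when an orbit that enters the sliding region through one fold point is carried, after passing through $\Sigma^+$ and/or $\Sigma^-$ and then sliding, back to a tangency configuration that closes up. I would therefore define a displacement function $\Delta(y)$ measuring the net signed change in the $y$-coordinate along one such passage, expand it in $\e$ using \eqref{p1s}, and locate its zeros. The hypothesis $b_{11}^-=-b_{22}^-$ makes the trace of $B^-$ vanish, which is exactly what kills the $\e$-order contribution to the relevant displacement and forces the leading sliding effect to appear at order $\e^2$ through the traces $c_{11}^\pm+c_{22}^\pm$; this is the structural reason the thresholds in (1) and (2) are expressed via $c_{11}^-+c_{22}^-$ and why the sign condition $bv_1^-+v_1^+\lessgtr 0$ (which by Theorem \ref{t1}(c) governs the lowest-amplitude stability) selects sliding versus escaping. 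For part (3), when $b_{11}^-\neq -b_{22}^-$ the trace of $B^-$ is nonzero, so the $\e$-order term dominates and has a definite sign on the whole sliding segment, whence $\Delta$ has no zero and no sliding/escaping limit cycle can occur; here I would also invoke the remark already stated in the excerpt that, each $Z_\e^\pm$ being linear, there is at most one fold per field and hence at most one candidate cycle, so ruling out the single candidate suffices.

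For the two explicit threshold inequalities in (1) and (2) I would compute the two geometrically distinguished closures. The transition between Type I and Type II happens when the returning orbit's re-entry point crosses one of the fold endpoints $y_{f1}$, $y_{f2}$: below the first threshold the orbit reenters the sliding region on the same side (Type I), between the two thresholds it crosses fully to the other zone (Type II), and above the upper threshold the cycle ceases to exist. Matching the computed reentry $y$-coordinate against $y_{f1}$ and $y_{f2}$, and using $0<a<d+be$ (resp. $a<0$, $0<d+be$) to guarantee that the fold points actually bound a genuine sliding (resp. escaping) region with the correct orientation of $Z^s$, yields precisely the stated bounds involving $(v_1^- b+v_1^+)^2/(2b^2e^2\pi)$; the factor $\pi$ enters from the half-period of the unperturbed harmonic flow in the zone $x<0$ where $A^-$ is the standard rotation.

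The main obstacle I anticipate is the bookkeeping in the second-order Melnikov-type expansion of the displacement: because the orbit alternates between the sliding flow on $\Sigma$ and the regular flows in $\Sigma^\pm$, the transition times are not the full period and must be tracked through the fold conditions, so the $\e^2$-coefficient is an integral over a nontrivially parametrized arc. Isolating the combination $c_{11}^-+c_{22}^-$ as the leading term, and verifying that all other $\e^2$ contributions either cancel under $b_{11}^-=-b_{22}^-$ or are absorbed into the threshold constants, is where the delicate computation lies; the $\pi$-factors and the exact numerical coefficients $1/(2b^2e^2\pi)$ and $2/(b^2e^2\pi)$ must be pinned down by carrying the integrals through exactly rather than to leading order, which is the technically demanding part of the proof.
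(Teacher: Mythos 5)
Your plan follows essentially the same route as the paper's proof: compute the fold points $y_{f1},y_{f2}$ and the sliding vector field, use the hypothesis $b_{11}^-=-b_{22}^-$ to push the decisive contribution to order $\e^2$ where $c_{11}^-+c_{22}^-$ appears, and decide existence and Type I versus Type II by where the returning orbit lands relative to the fold points (the paper realizes your ``displacement function'' concretely as the ordering of four return values $S_0,S_1,S_2,S_3$ on a transversal section $\Lambda$ through the visible fold, and proves part (3) by showing the order-$\e$ discrepancy of $S_1$ dominates the order-$\e^2$ width of the admissible interval). The proposal is correct in outline and identifies all the key mechanisms, so no substantive comparison is needed.
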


\begin{remark} We recall that the parameters of the unperturbed system $Z_0$ satisfy $b < 0, c>0, d>0, e>0$ and $a^2+ b c <0$. The inequalities on the parameters in the statement of Theorem \ref{t2} are related with the position of the fold points, the direction of the
sliding vector field, and the dynamics of the trajectories of $Z_\e^{\pm}$ passing through the fold points. More precisely, the condition $b_{11}^-=-b_{22}^-$ is a necessary condition for the existence of sliding/escaping limit cycle for the perturbed system $Z_{ \e}$, the conditions $b v_1^-+ v_1^+ < 0, b v_1^-+ v_1^+ > 0$ is a necessary conditions for the existence of sliding, escaping region, resp., the conditions $a<0$ and $0<d+be$ provides that the visible fold point is upper of the invisible fold point. Besides then, the conditions involving $trace(C^-)$ and $\Gamma$ is related with the intersection of the trajectories through the points $y_{f_1}, y_{f_2}$ and $y_{f_3}$ with the transverse section $\Lambda$ by the fold point $y_{f_1}$. For more details see Section \ref{secao-ciclosliding} and Figure \ref{S1S2S3}.
\end{remark}

Theorem \ref{t2} is proved in Subsection \ref{secao-ciclosliding}

\subsection{Simultaneity}

The next result provides conditions on the parameters of the vector
field $Z_{\e}$ for the simultaneous occurrence of crossing and
sliding/escaping limit cycles.

\begin{mtheorem}\label{t3}
Under the assumptions of Theorem \ref{t2} the following statements
hold.
\begin{itemize}
\item [$(1.a)$] If $0 < trace(C^-) < \dfrac{\Gamma}{2}$ then $Z_{ \e}$ possesses a sliding limit cycle of
type I and at most one more crossing limit cycle.

\smallskip

\item [$(1.b)$] If $\dfrac{\Gamma}{2} < trace(C^-) < 2 \Gamma$ then $Z_{ \e}$
possesses a sliding limit cycle of type II and at most one more
crossing limit cycle.

\smallskip

\item[$(2.a)$]  If  $-\dfrac{\Gamma}{2} <
trace(C^-) < 0$ then $Z_{ \e}$ possesses a escaping limit
cycle of type I and at most one more crossing limit cycle.

\smallskip

\item[$(2.b)$] If  $trace(C^-) < - \dfrac{\Gamma}{2}$ then $Z_{ \e}$ possesses a escaping limit cycle of
type II and at most one more crossing limit cycle.

\end{itemize}
Moreover, there exist parameters of $Z_{\e}$ for which the above
crossing limit cycle exists, see Figure
\ref{GraficoSlidingCrossingCycle}.
\end{mtheorem}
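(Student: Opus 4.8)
The plan is to isolate the crossing limit cycles from the sliding/escaping one and to bound each separately, the latter being already supplied by Theorem \ref{t2}. The decisive remark is algebraic: every item of Theorem \ref{t2} is stated under the hypothesis $b_{11}^-=-b_{22}^-$, that is $b_{11}^-+b_{22}^-=0$, so on the whole parameter region where a sliding or escaping cycle can occur one has
\begin{equation*}
K_1=be\xi^2\,(b_{11}^-+b_{22}^-)=0 .
\end{equation*}
Thus one of the three coefficients that govern the crossing limit cycles in Theorem \ref{t1} vanishes identically, and the crossing dynamics is controlled by $K_0$ and $K_2$ alone.

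First I would go back to the first-order Melnikov function produced in the proof of Theorem \ref{t1} (Subsection \ref{s3}), whose simple positive zeros are in one-to-one correspondence with the hyperbolic crossing limit cycles of $Z_\e$, and which has the form $\mathcal M(r)=K_0\,g_0(r)+K_1\,g_1(r)+K_2\,g_2(r)$ with $\{g_0,g_1,g_2\}$ an Extended Chebyshev system with positive accuracy on the admissible amplitude interval (this is precisely what yields the three crossing cycles in Theorem \ref{t1}$(a)$). Setting $K_1=0$ reduces it to $\mathcal M(r)=K_0\,g_0(r)+K_2\,g_2(r)$, whose positive zeros are the solutions of $g_0(r)/g_2(r)=-K_2/K_0$. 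From the explicit expressions of $g_0$ and $g_2$ computed in the proof of Theorem \ref{t1} one checks that this quotient is strictly monotone on the admissible interval, i.e. $\{g_0,g_2\}$ is itself an Extended Chebyshev system there; hence $\mathcal M$ has at most one zero. Since crossing limit cycles are counted by, and persist from, the simple zeros of the leading $\mathcal M$ (the second-order terms $C^{\pm},w^{\pm}$ do not create additional crossing cycles because $\mathcal M\not\equiv 0$ generically), $Z_\e$ has at most one crossing limit cycle. Combined with the sliding/escaping cycle furnished by Theorem \ref{t2} in each of the ranges of $c_{11}^-+c_{22}^-$, this proves the upper bounds in $(1.a)$--$(2.b)$; note that the bound is uniform in the four cases because it depends only on $K_1=0$, while the type of the nonsmooth cycle is decided solely by Theorem \ref{t2}.

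It remains to realise the crossing cycle simultaneously with the nonsmooth one. Here I would use that the crossing cycle is governed by the first-order data $v_1^{\pm}$ and $b_{11}^++b_{22}^+$ (through $K_0,K_2$), whereas the sliding/escaping cycle of Theorem \ref{t2} is a second-order phenomenon confined to a bounded neighbourhood of $\Sigma$ and controlled by the independent coefficient $c_{11}^-+c_{22}^-$. Concretely, after imposing $b_{11}^-+b_{22}^-=0$ the two surviving coefficients obey a single linear relation in which $bv_1^-+v_1^+$ appears, and the remaining freedom in $b_{11}^++b_{22}^+$ suffices to place a simple zero $r_*>0$ of $\mathcal M$ in the admissible range while keeping the sign of $bv_1^-+v_1^+$ prescribed by Theorem \ref{t2} (negative in $(1)$, positive in $(2)$) and respecting $0<a<d+be$, resp. $a<0<d+be$. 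Fixing these first-order data and then choosing $c_{11}^-+c_{22}^-$ inside the interval of the relevant item produces the sliding/escaping cycle without disturbing the simple zero $r_*$, since a simple zero of the first-order function is preserved under the $\CO(\e^2)$ correction. As $r_*$ is a fixed positive amplitude while the nonsmooth cycle lies in the bounded sliding region, the two closed orbits are distinct, which gives the configuration of Figure \ref{GraficoSlidingCrossingCycle}.

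The main obstacle is precisely this last compatibility. One must verify that the sign of $bv_1^-+v_1^+$ dictated by Theorem \ref{t2}, together with $K_1=0$, still leaves $(K_0,K_2)$ in the region for which $g_0/g_2=-K_2/K_0$ has a solution in the admissible amplitude interval, and that such a solution lies strictly outside the sliding region so that the crossing and nonsmooth cycles genuinely coexist rather than merge. By contrast, the reduction ``$K_1=0\Rightarrow$ at most one crossing cycle'' is immediate once the monotonicity of $g_0/g_2$ has been extracted from the explicit Melnikov function of Theorem \ref{t1}.
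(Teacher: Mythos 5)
Your proposal follows essentially the same route as the paper's proof in Subsection \ref{secao-simultaniedade}: the hypothesis $b_{11}^-=-b_{22}^-$ inherited from Theorem \ref{t2} kills the coefficient $K_1$, the first-order Melnikov function \eqref{M1} collapses to a combination of the two remaining basis functions, and a Wronskian/monotone-quotient argument (the paper's $W(f_0,f_1)$ computation is exactly your strict monotonicity of $g_0/g_2$) shows this two-element family is an extended Chebyshev system, so at most one crossing limit cycle coexists with the sliding or escaping cycle supplied by Theorem \ref{t2}. The only point treated differently is the realization of the simultaneous configuration, which the paper settles by the explicit Example \ref{exemplo-sliding} rather than by your parameter-independence argument.
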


Theorem \ref{t3} is proved in Subsection \ref{secao-simultaniedade}.

\begin{figure}[h]
\begin{center}
\begin{overpic}[width=7cm]{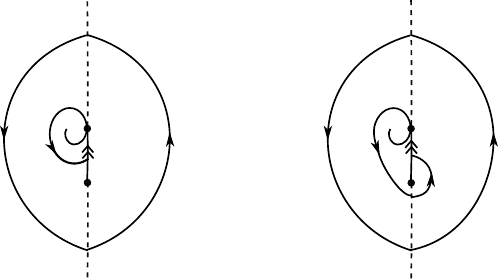}
\put(19,30){$y_{f1}$}\put(7,19){$y_{f2}$}\put(83,30){$y_{f1}$}
\put(72,19){$y_{f2}$}\put(11,50){$\Sigma$}\put(75,50){$\Sigma$}
\end{overpic}
\end{center}
\caption{Simultaneous occurrence of crossing and sliding limit cycles
for a 2-order linear perturbation of a piecewise linear
center.}\label{GraficoSlidingCrossingCycle}
\end{figure}


This paper is organized as follows. In Section
\ref{secao-teoria-basica} we provide some preliminary results.
Section \ref{s2} is devoted to the proofs of our main results.
Finally, in Section \ref{exemplos} we present some examples of
piecewise linear vector fields for which the upper bounds of crossing
and sliding/escaping limit cycles are reached.

\section{Preliminaries}\label{secao-teoria-basica}

This section is devoted to present some basic notions on nonsmooth
vector fields and all the tools needed to prove our main results.
Firstly, we introduce the concept of Filippov system. Then, some
results on extended Chebyshev systems are presented. This results are
important to bound the number of limit cycles. Finally, we discuss
the Bendixon transformation which allows the study of the stability
of the infinity.

\subsection{Filippov's convention}\label{subsecao-Filippov}

Let $h:\R^2\rightarrow\R$ a differentiable function for which $0$ is
a regular value. Consider the piecewise smooth vector field
\[
Z(x,y)=\left\{\begin{array}{ll}Z^+(x,y),&h(x,y)>0\\Z^-(x,y),&
h(x,y)<0.\end{array}\right.
\]
The switching manifold is given by $\Sigma=h^{-1}(0)$. Notice that
$h(x,y)=x$ for system \eqref{us1}. Consider the Lie's derivative
$Z^{\pm}h(p)=\langle Z^{\pm} , \nabla h \rangle,$ where $\langle
\cdot , \cdot \rangle$ is the canonical inner product in $\R^2$.
Then, according to Filippov's conventions (see \cite{F}), the
following regions on $\Sigma$ are distinguished: \textbf{Crossing
Region:} $\Sigma^c=\{p\in \Sigma;(Z^{+}h)(p).(Z^{-}h)(p)>0\}$.;
\textbf{Sliding Region:} $\Sigma^s=\{p\in \Sigma;(Z^{+}h)(p)<0,
(Z^{-}h)(p)>0\}$; and 
\textbf{Escaping Region:} $\Sigma^{e}=\{p\in
\Sigma;(Z^{+}h)(p)>0,(Z^{-}h)(p)<0\}$.
The local trajectories of the points in $\Sigma^s\cup\Sigma^e$ follow
the so called  \textit{sliding vector field}
\begin{equation}\label{eq campo filippov}
Z^s = \dfrac{Z^{-}h \cdot Z^{+} - Z^{+}h \cdot Z^{-}}{Z^{-}h -
Z^{+}h}.
\end{equation}
Then, the flow of $Z$ is obtained by the concatenation of flows of
$Z^{+},Z^{-}$, and $Z^s$.



We say that $p \in \Sigma$ is a {\it fold point} of $Z^{\pm}$ when
$Z^{\pm}h(p)=0$ and $(Z^{\pm})^{2} h(p) = Z^{\pm}.(Z^{\pm}h)(p)\neq
0$. Moreover, $p$ is called a {\it visible} (resp. {\it invisible})
fold point of $Z^{\pm}$ if $Z^{\pm}h(p)=0$ and
$(Z^{\pm})^{2}h(p)\gtrless 0$ (resp. $(Z^{\pm})^{2}h(p)\lessgtr 0$).


\begin{remark} Assuming that $p_0\in\Sigma^c$ is an invisible
fold point for both vector fields $Z^{+}$ and $Z^{-}$, a first return
map is well defined in a neighborhood of $p_0$. Indeed,  from the
Implicit Function Theorem, there exists a neighborhood $U$ of $p_0$
such that for each $p \in U\cap\Sigma,$ there exists a smallest
positive time $t(p)>0$ such that the trajectory $t\mapsto
\phi_{Z^{+}}(t,p)$ of $Z^{+}$ through $p$ intercepts $\Sigma$ at a
point $\widetilde{p}=\phi_{Z^{+}} (t(p),p)$. Then, define the
\textit{positive half-return map associated to $Z^{+}$} by
$\gamma_{Z^{+}}:(\R,0)\rightarrow (\R,0)$ where
$\gamma_{Z^{+}}(p)=\widetilde{p}$.
Analogously, we define the \textit{positive half-return map
associated to $Z^{-}$} by $\gamma_{Z^{-}}:(\R,0)\rightarrow (\R,0)$.
Thus, the first return map $\varphi_{Z}: (\Sigma,0)\rightarrow
(\Sigma,0)$ is defined by the composition
$\varphi_{Z}=\gamma_{Z^{-}}\circ \gamma_{Z^{+}}$.
\end{remark}

\subsection{Extended Chebyshev systems}

An ordered set of complex--valued functions
$\mathcal{F}=(g_0,g_1,\ldots,g_k)$ defined on a proper real interval
$I$ is an {\it Extended Chebyshev} system or ET--system on $I$ if and
only if any nontrivial linear combination of functions in
$\mathcal{F}$ has at most $k$ zeros counting multiplicities. The set
$\mathcal{F}$ is an {\it Extended Complete Chebyshev} system or an
ECT--system on $I$ if and only if for any $s\in\{0,1,\ldots,k\}$ we
have that $(g_0,g_1,\ldots,g_s)$ is an ET--system. For more details
see the book of Karlin and Studden \cite{KS}.

In order to prove that $\mathcal{F}$ is an ECT--system on $I$ it is
necessary and sufficient to show that $W(g_0,g_1,\ldots,g_s)(t)\neq
0$ on $I$ for $0\leq s\leq k$, where $W(g_0,g_1,\ldots,g_s)(t)$
denotes the Wronskian of the functions $(g_0,g_1,\ldots,g_s)$ with
respect to the variable $t$. That is,
\begin{equation*}
W(g_0,\ldots,g_s)(t)=\det\left(\begin{array}{ccc}
g_0(t)&\cdots&g_s(t)\\
g_0'(t)&\cdots&g_s'(t)\\
\vdots&\ddots&\vdots\\
g_0^{(s)}(t)&\cdots& g_s^{(s)}(t)
\end{array}\right).
\end{equation*}

In \cite{DT}, the authors proved the following results:

\begin{theorem}[\cite{DT}]\label{teorema-ECT}
Let $\mathcal{F} = [g_0, g_1, \dots , g_n]$ be an ordered set of
$C^\infty$ functions $g_j:[a, b]\to \R$ for $j=0,1,\ldots,n$ such
that there exists $\xi \in (a, b)$ with
$W(g_0,g_1,\ldots,g_{n-1})(\xi)=W_{n-1}(\xi) \neq 0$. Then,   the
following statements hold.
\begin{itemize}
\item [$(a)$] If $W_n(\xi) \neq  0$, then for each configuration of
$m \leq n$ zeros, taking into account their multiplicity, there
exists $f \in Span(\mathcal{F})$ with this configuration of zeros.

\item [$(b)$] If $W_n(\xi) = 0$ and $W_n'(\xi) \neq  0$, then for each
configuration of $m \leq n+1$ zeros, taking into account their
multiplicity, there exists $f \in  Span(\mathcal{F})$ with this
configuration of zeros.
\end{itemize}
\end{theorem}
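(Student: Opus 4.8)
The plan is to read the statement as a \emph{local realization} result near $\xi$: since both hypotheses are imposed only at the single point $\xi$, I may assume all prescribed zeros lie in an arbitrarily small interval $J\ni\xi$ on which $W_{n-1}$ has constant nonzero sign and on which $W_n$ either has constant nonzero sign (case $(a)$) or has a single simple zero at $\xi$ (case $(b)$). A configuration of $m$ zeros at $t_1<\cdots<t_r$ with multiplicities $m_1,\dots,m_r$, $\sum_i m_i=m$, is realized by $f=\sum_{j=0}^n\lambda_j g_j$ exactly when the coefficient vector $\lambda=(\lambda_0,\dots,\lambda_n)\in\R^{n+1}$ satisfies the $m$ homogeneous linear equations $f^{(l)}(t_i)=0$, $0\le l\le m_i-1$. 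I would use throughout that a nonzero Wronskian at a point forces linear independence of the corresponding functions: $W_{n-1}(\xi)\ne0$ makes $g_0,\dots,g_{n-1}$ independent, while in both cases $W_n\not\equiv0$ (either $W_n(\xi)\ne0$ or $W_n'(\xi)\ne0$) makes the full family $g_0,\dots,g_n$ independent, so that any nonzero $\lambda$ yields $f\not\equiv0$.

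For part $(a)$ one has $m\le n$, so the $m\le n$ homogeneous conditions on the $n+1$ unknowns $\lambda$ always admit a nontrivial solution, and by the independence of $g_0,\dots,g_n$ (this is where $W_n(\xi)\ne0$ enters, supplying the extra independent direction that is needed precisely when $m=n$) this $\lambda$ produces an $f\not\equiv0$ carrying the prescribed configuration. For $m\le n-1$ the hypothesis $W_{n-1}(\xi)\ne0$ already suffices.

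Part $(b)$ is the crux, since now $m$ may equal $n+1$ and the realization amounts to $n+1$ homogeneous conditions on $n+1$ unknowns, which has a nontrivial solution if and only if the associated $(n+1)\times(n+1)$ coefficient determinant vanishes. Treating first $n+1$ simple zeros at distinct nodes $x_0,\dots,x_n\in J$, this determinant is $M(x_0,\dots,x_n)=\det\big(g_j(x_i)\big)_{i,j=0}^n$. Being antisymmetric in the nodes, $M$ factors as $M=V\cdot G$ with $V=\prod_{0\le i<k\le n}(x_k-x_i)$ and $G$ smooth and symmetric; the classical confluent limit of a generalized Vandermonde gives $G(t,\dots,t)=c\,W_n(t)$ for a nonzero constant $c$, whence $G(\xi,\dots,\xi)=c\,W_n(\xi)$ and $\sum_i\partial_{x_i}G(\xi,\dots,\xi)=c\,W_n'(\xi)$. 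Since $W_n(\xi)=0$ the diagonal point lies on $\{G=0\}$, and since $W_n'(\xi)\ne0$ the directional derivative of $G$ along the diagonal $(1,\dots,1)$ is nonzero, so $\{G=0\}$ is a smooth hypersurface through $(\xi,\dots,\xi)$ transverse to the diagonal $\{x_0=\cdots=x_n\}$. A dimension count (the hypersurface is $n$-dimensional while the diagonals $x_i=x_k$ are only finitely many hyperplanes not containing it) lets me pick on $\{G=0\}$, arbitrarily close to $\xi$, a point with pairwise distinct coordinates; there $V\ne0$ and $M=0$, yielding a nonzero $\lambda$ and an $f\not\equiv0$ with the $n+1$ prescribed simple zeros. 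A configuration with multiplicities is handled identically, replacing repeated nodes by the corresponding derivative (Hermite) rows, so that $M$ becomes a confluent generalized Vandermonde whose diagonal limit is still governed by $W_n(\xi)$ and $W_n'(\xi)$.

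The delicate point, and the source of the $+1$ accuracy, is exactly this last step: when $W_n(\xi)=0$ the determinant $M$ degenerates to leading order on the diagonal, and I must extract a genuine off-diagonal zero of $M$ from the first-order data carried by $W_n'(\xi)$. Making the confluent-Vandermonde expansion $M=V\cdot G$ precise, identifying $\sum_i\partial_{x_i}G$ with a nonzero multiple of $W_n'(\xi)$, and verifying that $\{G=0\}$ really supports configurations with distinct (or suitably clustered) nodes, is where the work concentrates. By contrast, the companion sharpness assertions — that no nontrivial combination has more than $n$ zeros in case $(a)$, resp. $n+1$ in case $(b)$ — would follow from a generalized Rolle/Wronskian argument in which the constant sign of $W_{n-1}$ together with the sign of $W_n$, resp. the simple sign change of $W_n$, bounds the zero count from above; these are not needed for the existence claim itself but explain why $n$ and $n+1$ are the exact accuracies.
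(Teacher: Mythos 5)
First, a bookkeeping remark: the paper does not prove Theorem \ref{teorema-ECT} at all --- it is imported verbatim from \cite{DT} --- so the only possible comparison is with the argument there, which produces an element of $\mathrm{Span}(\mathcal{F})$ vanishing at $\xi$ to order \emph{exactly} $n$ (case $(a)$) or exactly $n+1$ (case $(b)$, where $W_{n-1}(\xi)\neq0$ together with $W_n'(\xi)\neq0$ forces the kernel direction of the jet matrix to have nonvanishing $(n+1)$-st derivative) and then unfolds that degenerate zero, obtaining the upper bound and the realization simultaneously.

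Your argument has a genuine gap: at every stage you only produce a nonzero $f\in\mathrm{Span}(\mathcal{F})$ vanishing \emph{to at least} the prescribed orders at the chosen nodes, whereas ``with this configuration of zeros'' means $f$ realizes \emph{exactly} that configuration near $\xi$ --- no extra zeros, no excess multiplicity. That exactness is the entire content of the theorem (it is what the paper uses to manufacture exactly $1$, $2$ or $3$ limit cycles), and your closing claim that the upper-bound assertions ``are not needed for the existence claim itself'' is precisely backwards: without a local bound of $n$ (resp.\ $n+1$) on the zeros of every nonzero element of the span near $\xi$, the prescribed configuration is not realized. Symptomatically, your proof never actually uses $W_{n-1}(\xi)\neq0$. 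A concrete failure: take $n=1$, $\mathcal{F}=[x^2,1]$, $\xi=0$, so that $W_1(0)=0$ and $W_1'(0)=-2\neq0$ but $W_0(0)=0$. The configuration of one simple zero ($m=1\leq n+1$) is not realized by any $\lambda_0x^2+\lambda_1$ near $0$ (such an element has zero, two, or one double zero there), yet your linear-algebra step returns $f=x^2-t_1^2$, which carries an uninvited second zero at $-t_1$. Thus the argument as written proves a false statement once $W_{n-1}(\xi)\neq0$ is dropped; that hypothesis must enter, and it enters exactly in the step you skipped. Your factorization $M=V\cdot G$ and the transversality of $\{G=0\}$ to the diagonal is a genuinely nice device for producing $n+1$ distinct nodes admitting a common nontrivial annihilator, and is close in spirit to the confluent-Vandermonde limit used in \cite{DT}; but it suffers from the same defect, since nothing guarantees that those $n+1$ zeros are simple or that they are the only zeros of $f$ near $\xi$. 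The standard repair is the exact-multiplicity-plus-division (or preparation) argument sketched above, which yields $F(x,\lambda)=c(x,\lambda)P(x,\lambda)$ with $c$ nonvanishing and $P$ polynomial-like of degree $n$ or $n+1$, from which both the bound and the exact realization follow.
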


\begin{corollary}[\cite{DT}]
Let $\mathcal{F} = [g_0, g_1, \dots , g_n]$ be an ordered set of
$C^\infty$ functions $g_j:[a, b]\to \R$ for $j=0,1,\ldots,n$. Assume
that all the Wronskians are nonvanishing except $W_{n}(x)$, which has
exactly one zero on $(a, b)$ and this zero is simple. Then,
$Z(\mathcal{F}) = n +1$ and for any configuration of $m \leq n +1$
zeros there exists an element in $Span(\mathcal{F})$ realizing it.
\end{corollary}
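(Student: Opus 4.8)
The plan is to establish two facts whose conjunction is the corollary: the \emph{realization} statement, that every configuration of $m\leq n+1$ zeros (counted with multiplicity) is attained by some element of $Span(\mathcal{F})$, and the \emph{upper bound}, that no nontrivial element of $Span(\mathcal{F})$ has more than $n+1$ zeros, so that $Z(\mathcal{F})$ (the maximal number of isolated zeros, counted with multiplicity, of elements of $Span(\mathcal{F})$) equals $n+1$. The realization is essentially immediate from the preceding theorem. Let $\xi_0\in(a,b)$ be the unique, simple, zero of $W_n$. Since $W_{n-1}$ never vanishes we have $W_{n-1}(\xi_0)\neq 0$, while simplicity of the zero gives $W_n(\xi_0)=0$ and $W_n'(\xi_0)\neq 0$. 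Hence Theorem \ref{teorema-ECT}$(b)$ applies with $\xi=\xi_0$ and produces, for each configuration of $m\leq n+1$ zeros, an $f\in Span(\mathcal{F})$ realizing it; in particular $Z(\mathcal{F})\geq n+1$.

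The substantive part is the upper bound. I would prove, by induction on the number of functions, the slightly more general claim: if $g_0,\dots,g_N$ are $C^\infty$ on $[a,b]$ with $W(g_0,\dots,g_j)$ nonvanishing for $0\leq j\leq N-1$ and $W(g_0,\dots,g_N)$ having at most $k$ zeros on $(a,b)$ counted with multiplicity, then every nontrivial element of $Span(g_0,\dots,g_N)$ has at most $N+k$ zeros counted with multiplicity. The corollary is the case $N=n$, $k=1$. The base case $N=0$ is trivial, since then the span consists of the multiples of $g_0=W_0$, which has at most $k$ zeros.

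For the inductive step I use the classical derivation--division reduction. As $g_0=W_0$ does not vanish, dividing by $g_0$ preserves zeros and their multiplicities; writing $\tilde g_j=(g_j/g_0)'$, a nontrivial combination $f=\sum_{j=0}^{N}a_jg_j$ satisfies $(f/g_0)'=\sum_{j=1}^{N}a_j\tilde g_j$. The key identity
\[
W(\tilde g_1,\dots,\tilde g_j)=\frac{W(g_0,g_1,\dots,g_j)}{g_0^{\,j+1}},\qquad j=1,\dots,N,
\]
shows that the reduced system $\{\tilde g_1,\dots,\tilde g_N\}$ of $N$ functions again has its first $N-1$ Wronskians nonvanishing and a top Wronskian with exactly the same zeros, and multiplicities, as $W_N$. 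If $a_1=\cdots=a_N=0$ then $f=a_0g_0$ has no zeros; otherwise $(f/g_0)'$ is a nontrivial element of the reduced span and, by the induction hypothesis, has at most $(N-1)+k$ zeros. A reverse Rolle argument (a zero of multiplicity $r$ of $f/g_0$ forces a zero of multiplicity $r-1$ of its derivative, and between consecutive distinct zeros there is an extra zero of the derivative) then bounds the zeros of $f/g_0$, hence of $f$, by $((N-1)+k)+1=N+k$. Specializing to $k=1$ gives $Z(\mathcal{F})\leq n+1$, which together with the reverse inequality yields $Z(\mathcal{F})=n+1$.

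The step I expect to require the most care is the bookkeeping of multiplicities along the division--derivation--Rolle chain, together with a clean derivation of the Wronskian identity above (a standard but slightly delicate determinant manipulation). Once the generalized inductive claim is set up everything else is formal; the only subtlety is to count zeros consistently on the open interval, with multiplicity, so that the reverse Rolle inequality is applied correctly at each reduction.
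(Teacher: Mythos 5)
The paper does not prove this corollary at all: it is quoted verbatim from the reference \cite{DT} and used as a black box, so there is no in-paper argument to compare against. Your proposal is correct and is essentially the standard proof from that source: the realization half and the lower bound $Z(\mathcal{F})\ge n+1$ follow from Theorem~\ref{teorema-ECT}$(b)$ applied at the simple zero of $W_n$ (legitimate since $W_{n-1}$ is nonvanishing there), and the upper bound $Z(\mathcal{F})\le n+1$ follows from the classical derivation--division induction via the identity $W(\tilde g_1,\dots,\tilde g_j)=W(g_0,\dots,g_j)/g_0^{\,j+1}$ together with a reverse Rolle count. The only points that genuinely need the care you flag are (i) checking that the reduced system inherits the hypotheses with the same top-Wronskian zero count, and (ii) running the Rolle inequality with multiplicities on the open interval; both go through as you describe, so the argument is complete.
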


\subsection{The Bendixson transformation}\label{sec:bendisxon}
The Bendixson transformation is a useful tool to analyze the
stability of the infinity of planar vector fields. In what follows,
following \cite{GLN,Llibre-Ponce}, we shall discuss this
transformation. Consider the differential systems
\begin{equation} \label{sistema-planar}
\dot{x}=f(x,y,\e),\qquad \dot{y}=g(x,y,\e),
\end{equation}
where $f,g$ are Lipschitz functions in the variables $(x, y)$ and $\e
> 0$ is a small parameter. Applying to system \eqref{sistema-planar} the
{\it Bendixson transformation} defined as
\begin{equation}
\begin{array}{ll}
\left(
\begin{array}{c}
u\\
v
\end{array} \right) &= \dfrac{1}{x^2+y^2}\left(
\begin{array}{c}
x\\
y
\end{array} \right),
\end{array}\label{transf-Bendixson}
\end{equation}
we obtain an equivalent system whose local phase portrait at the
origin is equivalent to the local phase portrait of system
\eqref{sistema-planar} in a neighborhood of the infinity.

Composing the Bendixson change of variables \eqref{transf-Bendixson}
with the polar coordinates $u = r \cos\theta$, $v = r \sin\theta$, we
get the {\it polar Bendixson transformation} $x= (\cos\theta)/r$, $y=
(\sin\theta)/r$. Applying this last transformation, system
\eqref{sistema-planar} becomes
\begin{equation}\label{bendixson2}
\begin{array}{lll}
\dot{r}=& R(r,\theta,\e)=&-r^{2}\left[ f\left(\dfrac{\cos\theta}{r},
\dfrac{\sin\theta}{r},\e\right) \cos\theta +
g\left(\dfrac{\cos\theta}{r}, \dfrac{\sin\theta}{r},\e\right)
\sin\theta\right],\vspace{0.2cm}\\
\dot{\theta}=& \xi(r,\theta,\e)=&-r\left[
f\left(\dfrac{\cos\theta}{r},\dfrac{\sin\theta}{r},\e\right)
\cos\theta - g\left(\dfrac{\cos\theta}{r},\dfrac{\sin\theta}{r},
\e\right) \cos\theta\right].
\end{array}
\end{equation}
We shall study the flow of system \eqref{bendixson2} contained in the
half-cylinder $\mathbb{R}^{+}\times\mathbb{S}^{1} = \{ (r,\theta):r
\geq 0,$ $\theta\in (-\pi,\pi) \}$.  Notice that after multiplying
\eqref{bendixson2} by a power of $r$, the system can be extended for
$r=0$. Therefore, the existence of a periodic orbit at infinity for
system \eqref{sistema-planar} is equivalent to the existence of the
periodic orbit $r = 0$ for system \eqref{bendixson2} on the cylinder.
Now, consider the assumptions:
\begin{itemize}
\item[($a$)] the functions $R$ and $\xi$ are locally Lipschitz functions
in the variable $r$ and they have period $2\pi$ in the variable
$\theta$.

\item[($b$)] $R(0,\theta,\e)=0$ and $\xi(0,\theta,\e)\neq 0$ for
all $\theta\in\mathbb{S}^{1}$ and for every $\e\geq 0$ sufficiently
small.
\end{itemize}
Notice that $(a)$ and $(b)$ are sufficient and necessary conditions
in order to guarantee that system \eqref{sistema-planar} has a
periodic solution at infinity.  Finally, taking $\theta$ as the new
independent variable the differential system \eqref{bendixson2} can
be written as the first order differential equation
\begin{equation}\label{eqbendixson3}
r'=\dfrac{dr}{d\theta}=S(r,\theta,\e)=\dfrac{R(r,\theta,\e)}{\xi
(r,\theta,\e)}.
\end{equation}
Consequently, the Poincar\'{e} map defined on a neighborhood of $r=0$
is given by $\Pi(\rho)=r(2\pi,\rho,\e)$, where $r(\theta,\rho,\e)$ is
the solution of \eqref{eqbendixson3} such that $r(0,\rho,\e)=\rho$.

\section{Proof of the main results}\label{s2}

In this section we provide the proofs of Proposition \ref{p1} and
Theorems \ref{t1}, \ref{t2} and \ref{t3}.

\subsection{{Proof of Proposition \ref{p1}}}

We assume that the piecewise linear vector field $W_0(x,y)$ satisfies
the hypotheses $(H_1), (H_2)$ and $(H_3)$. Then,   the left and right
linear differential systems are written as $(\dot x, \dot
y)=M_0^{\pm}(x,y)+(0,u_2^{\pm})^T,$ where
\[
M_0^{\pm}=\left(\begin{array}{cc}
m_{11}^{\pm}&m_{12}^{\pm}\\
m_{21}^{\pm}&-m_{11}^{\pm}\end{array}\right),
\]
with $(m_{11}^{\pm})^2+m_{12}^{\pm} m_{21}^{\pm}<0$ and
$m_{12}^{\pm}\neq 0$.

Then, applying the change of variables $(\tilde x,\tilde y)=\psi(x,y)
=(x, -m_{11}^-x-m_{12}^-y)$ we obtain the following piecewise linear
differential system
\begin{equation}\label{key2}
\left(\begin{array}{c}
\tilde x'\\
\tilde y'
\end{array}\right)
=\left\{\begin{array}{LC} \widetilde{A}^+\left(\begin{array}{c}
\tilde x\\
\tilde y
\end{array}\right)
+\left(\begin{array}{c}
0 \\
-m_{12}^+u_2^+
\end{array}\right) & \textrm{if} \quad \tilde x\geq 0, \vspace{0.2cm}\\
\widetilde{A}^-\left(\begin{array}{c}
\tilde x\\
\tilde y
\end{array}\right)
+\left(\begin{array}{c}
0 \\
-m_{12}^+u_2^-
\end{array}\right)  & \textrm{if} \quad \tilde x\leq 0,
\end{array}\right.
\end{equation}
where
\[
\widetilde{A}^+=\left(\begin{array}{cc}
m_{11}^+ - \dfrac{m_{11}^- m_{12}^+}{m_{12}^-}& -\dfrac{m_{12}^+}{m_{12}^-}\\
\dfrac{(m_{1}^-)^{2}m_{12}^+}{m_{12}^-} - 2m_{11}^- m_{11}^+ -
m_{12}^- m_{21}^+ & \dfrac{m_{11}^-m_{12}^+}{m_{12}^-} - m_{11}^+
\end{array}\right)
\]
and
\[\widetilde{A}^-=\left(\begin{array}{cc}
0&-1\\
-(m_{11}^-)^{2} - m_{12}^-m_{21}^-&0\end{array}\right).
\]
Notice that the above change of variables fixes the switching
manifold.

Now, let $\rho=\sqrt{|(m_{11}^-)^{2} + m_{12}^- m_{21}^-|}$, where
$(m_{11}^-)^{2} + m_{12}^- m_{21}^-<0$. Proceeding with the following
change of variables and rescaling of time
\[ (\tilde{x},\tilde{y},\tilde{t})  \mapsto \left(
\dfrac{x}{\rho},y ,\dfrac{t}{\rho}\right)\]
 system \eqref{key2} becomes
\begin{equation*}
Z_0^{-}(x,\tilde y)= \left(\begin{array}{cc}
0 & -1\\
1 & 0
\end{array}\right)\left(\begin{array}{c}
x\\
\tilde y
\end{array}\right)
+\left(\begin{array}{c}
0 \\
e
\end{array}\right),\quad\text{for}\quad x\leq 0
\end{equation*}
and
\begin{equation*}
Z_0^{+}(x,\tilde y)= \left(\begin{array}{cc}
a & b\\
c & -a
\end{array}\right)\left(\begin{array}{c}
x\\
\tilde y
\end{array}\right)
+\left(\begin{array}{c}
0 \\
d
\end{array}\right),\quad \text{for}\quad  x\geq0,
\end{equation*}
where
\[
\begin{array}{l}
a=\dfrac{1}{\rho}\Big(m_{11}^+ - \dfrac{m_{11}^-
m_{12}^+}{m_{12}^-}\Big),\vspace{0.2cm} \\
b=-\dfrac{m_{12}^+}{m_{12}^-},\vspace{0.2cm}\\
c=\dfrac{1}{\rho^2}\Big(\dfrac{(m_{11}^-)^{2}m_{12}^+}{m_{12}^-} -
2m_{11}^- m_{11}^+ - m_{12}^+ m_{21}^+\Big),\vspace{0.2cm}\\
d=-\dfrac{m_{12}^+u_2^+}{\rho} \quad \text{and}\vspace{0.2cm}\\
e=-\dfrac{m_{12}^-u_2^-}{\rho} > 0.
\end{array}
\]
The singular points of $Z_0^-, Z_0^+$ are given by $p^-=(-e,0)$ and
$p^+=\dfrac{d}{a^2+bc}(-b,a)$. From $(H_1), (H_2)$ and $(H_3)$
we conclude that $b<0, c>0, d>0, e>0$ and $a^2+bc<0$. $\hfill\square$

\subsection{Study of the infinity}\label{secao-estudo-infinito}

Applying the Bendixson change of coordinates given in  \eqref{p1s} to
$Z_{\varepsilon}$, we obtain that the differential system in
$\Sigma^-$ can be written
\begin{equation*}
\begin{array}{ll}
\dfrac{du^-}{dt}=& -v (2 e u+1) \Big(u^2+v^2\Big) + \varepsilon
\Big(-u^3 (v_1^- u+b_{11}^-)+v_1^- v^4+u v^2 (b_{11}^--2 b_{22}^-)\\
&\left. -u^2 v (a_{2}+2 (v_2^- u+b_{21}^-))+ v^3 (a_{2}-2 v_2^-
u)\Big)\right|_{u=u^-, v=v^-},\vspace{0.2cm} \\
\dfrac{dv^-}{dt}=& e u^4-e v^4+u^3+u v^2
+ \varepsilon  \Big(u^2 v (-2 v_1^- u-2 b_{11}^-+b_{22}^-)-v^3 (2 v_1^- u+b_{22}^-) \\
&\left. -u v^2 (2 a_{2}+b_{21}^-)+u^3 (v_2^- u+b_{21}^-)-v_2^-
v^4\Big)\right|_{u=u^-, v=v^-},
\end{array}
\end{equation*}
and for the differential system in $\Sigma^+$ can be written as
\begin{equation*}
\begin{array}{ll}
\dfrac{du^+}{dt}=& -a u^3+3 a u v^2-u^2 v (b+2 (c+d u))+v^3 (b-2 d u)
+\varepsilon  \Big(-u^3 (v_1^+ u+b_{11}^+)\\
&\left. +v_1^+ v^4 +u v^2 (b_{11}^+-2 b_{22}^+)-u^2 v (c_{2} +2
(d_{0} u+d_{1}))+v^3 (c_{2}-2 d_{0} u)\Big)\right|_{u=u^+, v=v^+},\vspace{0.2cm} \\
\dfrac{dv^+}{dt}=& -3 a u^2 v+a v^3-u v^2 (2 b+c)+u^3 (c+d u) +
\varepsilon  \Big(u^2 v (-2 v_1^+ u -2 b_{11}^++b_{22}^+)\\
&\left. -v^3 (2 v_1^+ u+b_{22}^+) -u v^2 (2 c_{2}+d_{1})+u^3 (d_{0}
u+d_{1})-d_{0} v^4\Big)-d v^4\right|_{u=u^+, v=v^+}.
\end{array}
\end{equation*}
Applying the polar change of coordinates the left differential system
can be written as
\begin{equation}\label{r-infinity}
\begin{array}{ll}
\dfrac{dr^-}{dt}=& -e r^2 \sin \theta -r \varepsilon  \Big(\cos
\theta (v_1^- r+(a_{2}+b_{21}^-) \sin \theta)+b_{11}^- \cos ^2
\theta\\
& \left. +\sin \theta (v_2^- r+b_{22}^- \sin \theta)\Big)\right|_{
r=r^-, \theta=\theta^-},\vspace{0.2cm} \\
\dfrac{d\theta^-}{dt}=& 1+e r \cos \theta+ \varepsilon  \Big(-\sin
\theta (v_1^- r+a_{2} \sin \theta)+\cos \theta ((b_{22}^--b_{11}^-)
\sin \theta\\
& \left. +v_2^- r)+b_{21}^- \cos ^2\theta\Big)\right|_{ r=r^-,
\theta=\theta^-},
\end{array}
\end{equation}
and, similarly, the right differential system can be written as
\begin{equation}\label{r+infinity}
\begin{array}{ll}
\dfrac{dr^+}{dt}=& -r (a \cos (2 \theta )+\sin \theta ((b+c)
\cos \theta+d r))-r \varepsilon  \Big(\cos \theta (v_1^+ r\\
&\left. +(c_{2}+d_{1}) \sin \theta) +b_{11}^+ \cos ^2\theta+\sin
\theta (d_{0} r+b_{22}^+ \sin \theta)\Big)\right|_{
r=r^+, \theta=\theta^+},\vspace{0.2cm} \\
\dfrac{d\theta^+}{dt}=& \cos \theta (d r-2 a \sin \theta)-b \sin
^2\theta+c \cos ^2\theta+\varepsilon  \Big(-\sin \theta
(v_1^+ r+c_{2} \sin \theta)\\
&\left. +\cos \theta ((b_{22}^+-b_{11}^+) \sin \theta+d_{0} r)+d_{1}
\cos ^2\theta\Big)\right|_{ r=r^+, \theta=\theta^+}.
\end{array}
\end{equation}
Notice that $r=0$ is a periodic solution for both differential systems \eqref{r-infinity} and \eqref{r+infinity}. Considering the rescaling $r^{\pm}=\varepsilon^{3}\rho^{\pm}$ and taking $\theta^{\pm}$ as the new independent variable, we obtain
\begin{equation}\label{rho-infinity}
\begin{array}{ll}
\dfrac{d\rho^-}{d\theta^-}= & -\e \rho  \Big(b_{11}^- \cos ^2\theta
+(a_{2}+b_{21}^-) \sin \theta \cos \theta+b_{22}^- \sin ^2(\theta
)\Big) \\
& +\e^2\rho \Big((b_{22}^- -b_{11}^-) \sin \theta \cos \theta-a_{2}
\sin ^2\theta+ b_{21}^- \cos ^2\theta\Big) \\
&\left. \Big(b_{11}^- \cos ^2\theta+(a_{2}+b_{21}^-) \sin \theta \cos
\theta +b_{22}^- \sin ^2\theta\Big)+ \mathcal{O}(\varepsilon^3)
\right|_{\rho=\rho^-, \theta=\theta^-},
\end{array}
\end{equation}
and
\begin{equation}\label{rho+infinity}
\begin{array}{ll}
\dfrac{d\rho^+}{d\theta^+}= & \dfrac{\rho  (2 a \cos (2 \theta )+
(b+c) \sin (2 \theta ))}{2 \Big(a \sin (2 \theta )+b \sin ^2\theta -c
\cos ^2\theta\Big)}-\e\dfrac{\rho}{2 \Big(a \sin (2 \theta )+
b \sin ^2\theta-c \cos ^2\theta\Big)^2}\\
&  \Big(-\sin (2 \theta ) (a (b_{11}^++b_{22}^+) +b d_{1}-
c c_{2})+\cos (2 \theta ) (a c_{2}-a d_{1}+b b_{22}^++c b_{11}^+)\\
&\left. -a c_{2}-a d_{1} -b b_{22}^++c b_{11}^+\Big) +
\mathcal{O}(\varepsilon^2)\right|_{\rho=\rho^+, \theta=\theta^+}.
\end{array}
\end{equation}
Let $\rho^{\pm}_{\e}(\theta)=\rho_{0}^{\pm}(\theta) +
\varepsilon\rho_{1}^{\pm}(\theta,\e) + \mathcal{O}(\varepsilon^2)$ be, respectively,
the solutions of \eqref{rho-infinity} and \eqref{rho+infinity} satisfying  $\rho^\pm\left( -\pi/2,\e\right) =\rho_{0}$.
Thus, we have
\begin{equation*}
\begin{array}{ll}
\rho_{0}^{-}(\theta)= & \rho_{0},\\ \\
\rho_{0}^{+}(\theta)= & \dfrac{\rho_{0}}{\sqrt{2}} \Big( \sqrt{
\dfrac{2 a \sin (2 \theta )-(b+c) \cos (2 \theta )+b-c}{b}} \Big),\vspace{0.2cm} \\
\rho_{1}^{-}(\theta)= & \dfrac{\rho_{0}}{4}  (-2 b_{11}^- \theta
-b_{11}^- \sin (2 \theta )+\pi  b_{11}^-+a_{2} \cos (2 \theta
)+a_{2}+b_{21}^- \cos
(2 \theta )+b_{21}^--2 b_{22}^- \theta\\
&  +b_{22}^- \sin (2 \theta ) +\pi  b_{22}^-),\\ \\
\rho_{1}^{+}(\theta)= & - \dfrac{\rho_{0}}{4b\xi \sqrt{-2b}
\sqrt{-2 a \sin (2 \theta )+(b+c) \cos (2 \theta )-b+c}} \Big(2 b
(b_{11}^++b_{22}^+)
\arctan \Big(\dfrac{a}{\xi}\\
& +\dfrac{b \tan \theta}{\xi }\Big) (-2 a \sin (2 \theta )+(b+c)
\cos (2 \theta )-b+c)+2 \sin (2 \theta ) (\pi  a b (b_{11}^++b_{22}^+)\\
& +2 a c_{2} \xi +b \xi  (b_{22}^+-b_{11}^+)) -\cos (2 \theta )
(\pi  b (b+c) (b_{11}^++b_{22}^+)+2 \xi  (c c_{2}-b d_{1}))\\
&+\pi  b (b-c) (b_{11}^++b_{22}^+)+2 b d_{1} \xi -2 c c_{2} \xi
\Big),
\end{array}
\end{equation*}
where $ a^2 + b c = -\xi^{2}$, with $\xi > 0$. Therefore, the
displacement function writes
\begin{equation*}
\begin{array}{ll}
\rho_{\e}(\rho_0)& = \rho_{\e}^{+}(\pi/2)-\rho_{\e}^{-}(-3\pi/2) \vspace{0.2cm} \\
& =  -\dfrac{1}{2} \varepsilon\pi\rho_{0} \Big(b_{11}^-+b_{22}^- +
\dfrac{b_{11}^+  + b_{22}^+}{\xi} \Big)+\mathcal{O}(\varepsilon^{2}).
\end{array}
\end{equation*}
Now, since $r=0$ is a periodic solution for both differential equations \eqref{r-infinity} and \eqref{r+infinity}, we conclude that $\rho_{\e}(0)=0$. Following Section \ref{sec:bendisxon}, this means that the infinity can be seen as a periodic solution of $Z_{\e}$. To investigate its stability, we compute
\[
\rho_{\e}'(0)=-\dfrac{1}{2} \varepsilon\pi \Big(b_{11}^-+b_{22}^- +
\dfrac{b_{11}^+  + b_{22}^+}{\xi} \Big)+\mathcal{O}(\varepsilon^{2}).
\]
Therefore, since $\xi>0$, we get that $\sgn(\rho_{\e}'(0))=-\sgn\left(\xi(b_{11}^-+b_{22}^- )+b_{11}^+  + b_{22}^+\right),$ for $\e>0$ sufficiently small.
Consequently, by construction, if $\xi(b_{11}^-+b_{22}^- )+b_{11}^+  + b_{22}^+ > 0$
(resp. $\xi(b_{11}^-+b_{22}^-) + b_{11}^+  + b_{22}^+ < 0$), then the
infinity is a asymptotically stable (resp. unstable) periodic solution.

\subsection{Proof of Theorem \ref{t1}}\label{s3}

The proof will be split in three steps. In the first one we prove
that the number of crossing limit cycles of $Z_{1,\e}(X)$ is given by
the zeros of the first order Melnikov function
\begin{equation}\label{M1}
\begin{array}{ll}
M_{1}(y_0)= & \dfrac{1}{2 y_{0}}\Big(4 v_1^- y_{0}-2
(b_{11}^-+b_{22}^-)
\Big(\pi  \Big(e^2+y_{0}^2\Big)+e y_{0}\Big)\vspace{0.2cm} \\
& +(b_{11}^-+b_{22}^-) \Big(e^2+y_{0}^2\Big) \arccos\Big(\frac{2 e^2}
{e^2+y_{0}^2}-1\Big)\vspace{0.2cm} \\
& -\dfrac{1}{b \xi ^3} \Big(-2 b d y_{0} \xi  (b_{11}^++b_{22}^+)-
4 v_1^+ y_{0} \xi ^3+b (b_{11}^++b_{22}^+) \Big(d^2+y_{0}^2 \xi ^2\Big)\vspace{0.2cm} \\
& \arccos\Big(\frac{2 d^2}{d^2+y_{0}^2 \xi ^2}-1\Big) \Big)\Big),
\end{array}
\end{equation}
where the Melnikov Function is given by
\[
M(y_{0},\varepsilon)=M_0(y_0) + M_1(y_0)\varepsilon +
M_2(y_0)\varepsilon^2 +\mathcal{O}(\varepsilon)^3
\]
and $M_i(y_0)=M^{-}_{i}(y_0)-M^{+}_{i}(y_0)$ for $i=0,1,2$.

\vspace{0.2cm}

In the second one we prove that the upper bound of the number of
zeros is three, and that this number is reached. Finally, in the
third step we study the stability of the crossing limit cycles.

\begin{lemma}\label{mel}
The zeros of $M_1(y_0)$ correspond to crossing limit cycles for
$Z_{1, \e}(X)$.
\end{lemma}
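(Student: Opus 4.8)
The plan is to reduce the detection of crossing limit cycles to the zeros of a one–variable displacement function and then to identify its first nonzero coefficient with $M_1$. First I would parametrize the period annulus of the global center $Z_0$ by the upper intersection point $(0,y_0)$, $y_0>0$, of each periodic orbit with $\Sigma$. From the signs of the first components of $Z_0^{\pm}$ (recall $b<0$) one checks that $(0,y_0)$ with $y_0>0$ and $(0,y_0)$ with $y_0<0$ lie in the crossing region $\Sigma^c$, so that near these points the flow genuinely crosses $\Sigma$. I would then introduce the two half–return maps $y_0\mapsto P_{-}(y_0,\e)$ and $y_0\mapsto P_{+}(y_0,\e)$, where $(0,-P_{-}(y_0,\e))$ is the point at which $\Sigma$ is hit by the orbit of $Z_\e^{-}$ issuing from $(0,y_0)$, and $(0,-P_{+}(y_0,\e))$ is the point at which $\Sigma$ is hit by the backward orbit of $Z_\e^{+}$ issuing from $(0,y_0)$. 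Since the unperturbed arcs meet $\Sigma$ transversally, the flight times depend analytically on $(y_0,\e)$ by the implicit function theorem, and so do $P_{\pm}$. A point $(0,y_0)$ lies on a crossing periodic orbit of $Z_{1,\e}$ if and only if the displacement $d(y_0,\e):=P_{-}(y_0,\e)-P_{+}(y_0,\e)$ vanishes.

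Next I would extract the first order term. Hypothesis $(H_3)$ asserts that $Z_0$ is a global center, i.e. its full return map is the identity; hence $P_{-}(y_0,0)=P_{+}(y_0,0)=y_0$ and $d(y_0,0)\equiv 0$, which is exactly $M_0\equiv0$. Writing $d(y_0,\e)=\e\,M_1(y_0)+\mathcal{O}(\e^2)$ with $M_1=\partial_\e d(y_0,0)=M_1^{-}-M_1^{+}$, it remains to evaluate $M_1^{\pm}$ as the first order variation of each half–return map along the corresponding unperturbed arc. For the left arc this is a circle of radius $\sqrt{e^2+y_0^2}$ about $(-e,0)$, whose span on $\Sigma$ subtends the angle $\arccos\!\big(2e^2/(e^2+y_0^2)-1\big)$; for the right arc, $A^{+}$ has eigenvalues $\pm i\xi$ (because $a^2+bc=-\xi^2$), so the orbit is an ellipse contributing $\arccos\!\big(2d^2/(d^2+y_0^2\xi^2)-1\big)$ together with the corresponding $\xi$–factors. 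Integrating the explicit linear flows against the perturbation $Z_1$ over these two arcs and forming the difference $M_1^{-}-M_1^{+}$ yields the displayed formula for $M_1(y_0)$.

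Finally, the correspondence follows from the implicit function theorem applied to $d=0$. Writing $d(y_0,\e)=\e\,\tilde d(y_0,\e)$ with $\tilde d(y_0,0)=M_1(y_0)$, at a simple zero $y_0^{*}$ of $M_1$ one has $\tilde d(y_0^{*},0)=0$ and $\partial_{y_0}\tilde d(y_0^{*},0)=M_1'(y_0^{*})\neq0$, so for every sufficiently small $\e\neq0$ there is a unique nearby $y_0(\e)$ solving $\tilde d(y_0(\e),\e)=0$, i.e. a crossing limit cycle of $Z_{1,\e}$. A standard Weierstrass–preparation argument upgrades this to a count with multiplicity, so that the crossing limit cycles bifurcating from the period annulus are in one–to–one correspondence with the zeros of $M_1$, as claimed.

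I expect the main obstacle to be the explicit evaluation of the two Melnikov integrals in the second step. The bookkeeping of the flight times on the circular and elliptic arcs, the orientation and sign conventions at the two crossing points, and the normalization through $\xi=\sqrt{-(a^2+bc)}$ are delicate, and it is precisely there that the arccosine terms and the exact coefficients of the stated $M_1$ must be produced; by comparison the transversality and implicit function theorem steps are routine.
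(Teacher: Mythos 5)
Your proposal follows essentially the same route as the paper: parametrize the periodic orbits of the global center by their upper crossing point $(0,y_0)$, build the forward half-return map for $Z_\e^-$ and the backward half-return map for $Z_\e^+$, use $(H_3)$ to kill the zeroth-order term, expand the flight times and solutions in $\e$ to obtain $M_1=M_1^--M_1^+$, and conclude via the implicit function theorem at simple zeros. The only difference is that you defer the explicit evaluation of the arc contributions, which is exactly the computation the paper carries out via the expansions of $t_{l,\e}$ and $t_{r,\e}$; the structure and conclusions match.
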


\begin{proof}

Let $Z_{\e}$ be given by \eqref{p1s}. From hypotheses we know that $b
< 0,d > 0$ and $a^2 + b c <0$. Thus, denote $a^2 + b c=-\xi^2$,
$\xi>0$. Let $(x^{\pm}_{\e}(t),y^{\pm}_{\e}(t))$ be the trajectories
of the linear vector fields $Z_0^{\pm}$ satisfying $x^-_{\e}(0) = 0,$
$y^-_{\e}(0) = y_0>0$, and $x^+_{\e}(0) = 0, y^+_{\e}(0) = y_1$. So,
for $\e=0$, we compute
\begin{equation}\label{leftsolution1}
\begin{array}{l}
x^-_{0} (t) = e (-1+\cos t)-y_0 \sin t,\vspace{0.2cm}\\
 y^-_{0} (t) = y_0 \cos t+e \sin t,
\end{array}
\end{equation}
and
\begin{equation}\label{rightsolution1}
\begin{array}{l}
x^+_{0}(s) = \dfrac{b (d-d \cos(s \xi )+y_{1} \xi  \sin(s \xi ))}{\xi
^2},
\vspace{0.2cm}\\
y^+_{0}(s) = \dfrac{1}{\xi ^2}\Big(-d+\Big(d+y_{1} \xi ^2\Big) \cos(s
\xi )+\xi (d-y_{1})\sin(s \xi)\Big).
\end{array}
\end{equation}
Let $t_{l,\e}>0$ and $t_{r,\e}<0$ be the first return times to
$\Sigma$ of the above solutions, that is
$x^-_{\e}(t_l)=x^+_{\e}(t_r)=0$. For $\e=0$ we have
\begin{equation*}
t_{l0}=2 \pi -\arccos\Big(\frac{2 e^2}{e^2+y_{0}^2}-1 \Big)
\end{equation*}
and
\[
t_{r0}=-\dfrac{1}{\xi}\arccos\left(\frac{2 d^2}{d^2+\xi ^2
y_{1}^2}-1\right).
\]
Writing $t_{l,\e}=t_{l0} +t_{l1} \e+\CO(\e^2)$ and $t_{r,\e}=t_{r0}
+t_{r1} \e+\CO(\e^2)$, the coefficients $t_{l1}$ and $t_{r1}$  can be
computed by expanding the equations $x^-_{\e}(t_l)=0$ and
$x^+_{\e}(t_r)=0$ around $\e=0$. So
\[
\begin{array}{ll}
t_{l1}=& \dfrac{1}{2 y_{0} \Big(e^2+y_{0}^2\Big)}\Big(2 y_{0} (2
v_1^- e-e (e (b_{11}^-+b_{22}^-)+y_{0} (b_{21}^--a_{2}))+
2 v_2^- y_{0})-2 \pi  \Big(e^2+y_{0}^2\Big) (e (b_{11}^-\vspace{0.2cm} \\
& +b_{22}^-)+y_{0} (b_{21}^--a_{2}))+\Big(e^2+y_{0}^2\Big)
\arccos\Big(\frac{2 e^2}{e^2+y_{0}^2}-1\Big) (e
(b_{11}^-+b_{22}^-)+y_{0} (b_{21}^--a_{2}))\Big),
\end{array}
\]
and
\[
\begin{array}{ll}
t_{r1}=& \dfrac{1}{2 b y_{1} \xi ^3 \Big(d^2+y_{1}^2 \xi ^2\Big)}
\Big(-2 d y_{1} \xi  \Big(y_{1} \Big(a^2 c_{2}+a b (b_{22}^+
-b_{11}^+)-b^2 d_{1}\Big)+b d (b_{11}^++b_{22}^+)\Big)\vspace{0.2cm} \\
& +2 y_{1} \xi ^3 (2 a v_1^+ y_{1}+2 b d_{0} y_{1}-2 v_1^+ d-c_{2} d
y_{1})+\Big(c_{2} y_{1} \Big(a^2+\xi ^2\Big)+b (-a b_{11}^+ y_{1}
+a b_{22}^+ y_{1}\vspace{0.2cm} \\
& +b_{11}^+ d +d b_{22}^+)-b^2 d_{1} y_{1}\Big)\Big(d^2+y_{1}^2 \xi
^2\Big) \arccos\Big(\dfrac{2 d^2}{d^2+y_{1}^2 \xi ^2}-1\Big) \Big).
\end{array}
\]
Replacing the expression of $t_{l0}$ and $t_{l1}$ in the expansion of
the solution of \eqref{leftsolution1} we get the positive half return
map in $\Sigma^-$, i.e. 
\begin{equation*}\label{ml1}
M^{-}_{1}=\frac{4 v_1^- y_{0}-2 (b_{11}^-+b_{22}^-) \Big(\pi  e^2+e
y_{0}+\pi  y_{0}^2\Big)+(b_{11}^-+b_{22}^-) \Big(e^2+y_{0}^2\Big)
\arccos \Big(\frac{2 e^2}{e^2+y_{0}^2}-1\Big)}{2 y_{0}}.
\end{equation*}
Analogously, replacing the expressions of $t_{r0}$ and $t_{r1}$ in
the expansion of the solution of \eqref{rightsolution1} we obtain the
negative half return map in $\Sigma^+$, namely
\begin{equation*}\label{mr1}
M^{+}_{1}=\frac{b (b_{11}^++b_{22}^+) \left(d^2+\xi ^2 y_{1}^2\right)
\arccos\left(\frac{2 d^2}{d^2+\xi ^2 y_{1}^2}-1\right)-2 b d \xi
y_{1} (b_{11}^++b_{22}^+)-4 v_1^+ \xi ^3 y_{1}}{2 b \xi ^3 y_{1}}.
\end{equation*}
The difference between $M^{-}_{1}$ and $M^{+}_{1}$ provides the first
order Melnikov function $M_1(y_0) = M^{-}_{1}(y_0) - M^{+}_{1}(y_0)$
given in \eqref{M1}, and the simple zeros of $M_1(y_0)$ provide the
crossing limit cycles of $Z_{1,\e}(X)$.
\end{proof}

\begin{lemma}\label{[lema2]}
The function $M_{1}$ presented in \eqref{M1} has at most three simple
zeros. Furthermore,  this upper bound is reached.
\end{lemma}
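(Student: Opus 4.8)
The plan is to recognise $M_1$ as an element of a three--dimensional space of functions of $y_0$ and to bound its zeros by the theory of extended Chebyshev systems with positive accuracy (Theorem~\ref{teorema-ECT} and the corollary following it). First I would collect the coefficients of \eqref{M1} according to the three independent parameter combinations $v_1^-+v_1^+/b$, $b_{11}^-+b_{22}^-$ and $b_{11}^++b_{22}^+$ (equivalently $K_0,K_1,K_2$), writing $M_1(y_0)=\lambda_0\,g_0(y_0)+\lambda_1\,g_1(y_0)+\lambda_2\,g_2(y_0)$ with $g_0\equiv 1$ and, after absorbing additive constants into $\lambda_0$,
\[
g_1(y_0)=\frac{(e^2+y_0^2)\big(\varphi(y_0)-2\pi\big)}{2y_0},\qquad
g_2(y_0)=-\frac{(d^2+\xi^2y_0^2)\,\psi(y_0)}{2\xi^3 y_0},
\]
where $\varphi(y_0)=\arccos\!\big(\tfrac{2e^2}{e^2+y_0^2}-1\big)$ and $\psi(y_0)=\arccos\!\big(\tfrac{2d^2}{d^2+\xi^2y_0^2}-1\big)$. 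The crucial structural feature, inherited from the return times of Lemma~\ref{mel}, is the asymmetric shift by $2\pi$ present in $g_1$ but absent in $g_2$: the left center has angular speed $1$ and its orbit almost completes a full revolution before returning to $\Sigma$, whereas the right system turns with speed $\xi$ through the partial arc $\psi$ only. Thus the ambient space is three--dimensional, i.e. $n=2$ in the notation of Theorem~\ref{teorema-ECT}, and the target bound $3=n+1$ will require accuracy one.

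Next I would record the elementary identities $\varphi'(y_0)=2e/(e^2+y_0^2)$ and $\psi'(y_0)=2d\xi/(d^2+\xi^2y_0^2)$, which make every Wronskian of $g_0,g_1,g_2$ a combination of rational functions and of $\varphi,\psi$. Since $g_0'\equiv 0$, the ordering is dictated by the first nonzero derivative. A short computation shows that $g_1'$ changes sign on $(0,\infty)$, so $g_1$ cannot sit in the second slot; I would therefore use the ordered set $\mathcal{F}=(g_0,g_2,g_1)$. Indeed $g_2'(y_0)=-\big(\psi(\xi^2y_0^2-d^2)+2d\xi y_0\big)/(2\xi^3 y_0^2)$, and the substitution $w=\xi y_0/d$ reduces its sign to that of $R(w)=\arctan(w)(w^2-1)+w$; since $R(0)=0$ and $R'(w)=2w^2/(1+w^2)+2w\arctan w>0$ for $w>0$, one gets $g_2'<0$ throughout. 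Hence $W_0=g_0\neq 0$ and $W_1=g_2'\neq 0$ on $(0,\infty)$, placing us in the hypotheses of the corollary of \cite{DT} save for the top Wronskian.

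It then remains to analyse $W_2=W(g_0,g_2,g_1)=g_2'g_1''-g_2''g_1'$. Writing $W_2=(g_2')^2\,(g_1'/g_2')'$, its zeros are the critical points of the ratio $g_1'/g_2'$; after clearing the common factor $1/(2y_0^2)$ from $g_1'$ and $g_2'$, this is equivalent to showing that the Wronskian $W(N_1,N_2)$ of the numerators $N_1=(\varphi-2\pi)(y_0^2-e^2)+2ey_0$ and $N_2=\psi(\xi^2y_0^2-d^2)+2d\xi y_0$ has exactly one zero on $(0,\infty)$ and that it is simple. Granting this, all lower Wronskians are nonvanishing and $W_2$ has a single simple zero, so the corollary following Theorem~\ref{teorema-ECT} gives $Z(\mathcal{F})=n+1=3$, i.e. $M_1$ has at most three simple zeros. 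Moreover, since $(\lambda_0,\lambda_1,\lambda_2)$---equivalently $(K_0,K_1,K_2)$---ranges over all of $\R^3$ by the surjectivity noted after the definition of the $K_i$, the same corollary produces coefficients realising three simple zeros, so the bound is attained.

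The hard part is the last step: proving that $W(N_1,N_2)$ has exactly one, simple, zero. The obstruction is that $\varphi$ and $\psi$ carry two incommensurate angular frequencies ($1$ for the left center and $\xi$ for the right one), so no single change of variable linearises both arccosines at once; I expect to substitute $y_0=e\tan(\varphi/2)$, which sends $\varphi$ to the new independent variable and turns $\psi$ into $2\arctan\!\big((\xi e/d)\tan(\varphi/2)\big)$, and then to extract monotonicity of the relevant quotient from a sign analysis of its derivative, combined with the endpoint behaviour as $y_0\to0^+$ and $y_0\to\infty$ to pin the count down to exactly one (and to rule out the degenerate ECT case in which $W_2$ never vanishes and the bound would drop to two). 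A minor technical point is that \cite{DT} is stated on a compact interval; here it is applied on arbitrary $[a,b]\subset(0,\infty)$, and the uniform signs of $W_0,W_1$ together with the single simple zero of $W_2$ let one pass to the whole half-line.
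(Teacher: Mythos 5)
Your framework is sound and genuinely different from the paper's, but it contains a real gap at its load-bearing step. You correctly identify that $M_1$ lives in the three-dimensional span of $(g_0,g_2,g_1)$, that this set cannot be an ECT-system (otherwise at most two zeros would be possible, contradicting the fact that three are attained), and that one must therefore invoke the positive-accuracy corollary of \cite{DT}, which requires the top Wronskian $W_2=W(g_0,g_2,g_1)$ to have \emph{exactly one simple} zero on the relevant interval. Your verifications of $W_0\neq0$ and $W_1=g_2'\neq0$ are correct. But the claim that $W(N_1,N_2)$ has exactly one simple zero on $(0,\infty)$ is never proved: you state it as "the hard part," propose a substitution, and say you "expect" to extract monotonicity. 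Without that step nothing follows --- if $W_2$ had two zeros, or a non-simple zero, the accuracy could exceed one and the upper bound of three would fail; and the realizability of three zeros also hinges on it. The difficulty is not cosmetic: $W_2$ mixes the two arccosines $\varphi$ and $\psi$ with incommensurate "frequencies" $e$ and $d/\xi$, and no single change of variable rationalizes both, so the required sign analysis is genuinely delicate and is precisely what your writeup omits.

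The paper circumvents exactly this obstruction by a different decomposition. After the rescaling $y_0=\alpha s_0/\xi$, $e=\alpha/\xi$, $d=\alpha\beta$, it writes $2b\beta s_0\xi^2 M_1$ as a combination of the \emph{four} functions $f_0=s_0$, $f_1=1+\beta^2s_0^2$, $f_2=(s_0^2+1)\arccos\bigl(\tfrac{2}{s_0^2+1}-1\bigr)$, $f_3=(\beta^2s_0^2+1)\arccos\bigl(\tfrac{2}{\beta^2s_0^2+1}-1\bigr)$ (the coefficients of $f_1$ and $f_3$ are both tied to $K_1$, so the span of attainable $M_1$'s is still three-dimensional, but the ECT bound for a four-element ordered set already gives at most three zeros). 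The payoff is that the top Wronskian $W_3$ of this ordered set contains only \emph{one} arccosine, and its nonvanishing reduces to the elementary monotonicity of $2s_0(s_0^2-1)+(s_0^2+1)^2\arccos\bigl(\tfrac{2}{s_0^2+1}-1\bigr)$; the sharpness of the bound is then exhibited by the explicit Example \ref{exemplo-costura} rather than derived from the accuracy theory. If you want to keep your three-function route (which, if completed, would yield statement $(a)$ of Theorem \ref{t1} directly from the corollary rather than from an example), you must actually carry out the one-simple-zero analysis of $W_2$; as it stands, the proof is incomplete precisely where the paper's argument does its real work.
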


\begin{proof}
Considering the change of coordinates and parameters given by
$y_{0}=\alpha s_{0}/\xi$, $e=\alpha/\xi$ and $d=\alpha\beta$ in the
function $M_1$, given in \eqref{M1}, we obtain
\begin{equation*}
\begin{array}{ll}
M_1(s_0)=   & -\dfrac{1}{2 b \beta  s_{0} \xi ^2}\Big(-4 \beta s_{0}
\xi ^2 (v_1^- b+v_1^+)-2 \alpha  b \beta ^2 s_{0}(b_{11}^+
+b_{22}^+)+\alpha  b \xi  (b_{11}^-+b_{22}^-) \Big(\pi  \beta ^2
s_{0}^2\vspace{0.2cm}\\
&+2 \beta  s_{0}+\pi \Big)+\alpha  b \xi  (b_{11}^- +b_{22}^-)
\Big(\beta ^2 s_{0}^2+1\Big) \arccos\Big(1-\dfrac{2}{\beta ^2
s_{0}^2+1}\Big)+\alpha  b \beta ^2 \Big(s_{0}^2+1\Big)\vspace{0.2cm}\\
& (b_{11}^++b_{22}^+) \arccos\Big(\dfrac{2}{s_{0}^2+1}-1\Big)\Big).
\end{array}
\end{equation*}
The positive zeros of $M_1(s_0)$ coincide with the zeros of $2 b
\beta  s_{0} \xi ^2 M_1(s_0)=\widetilde{M}_{1}(s_0)$. We have that
\begin{equation*}
\begin{array}{ll}
\widetilde{M}_{1}(s_0)=& 2 \alpha  b \beta ^2 s_{0}
(b_{11}^++b_{22}^+) +4 \beta  s_{0} \xi ^2 (v_1^- b+v_1^+)-\alpha  b
\xi  (b_{11}^-+b_{22}^-)
\Big(\pi  \beta ^2 s_{0}^2+2 \beta  s_{0}+\pi \Big)\vspace{0.2cm}\\
& -\alpha  b \xi  (b_{11}^-+b_{22}^-) \Big(\beta ^2 s_{0}^2+1\Big)
\arccos\Big(1-\dfrac{2}{\beta ^2 s_{0}^2+1}\Big)-\alpha  b \beta ^2
\Big(s_{0}^2+1\Big) (b_{11}^++b_{22}^+)\vspace{0.2cm}\\
& \arccos\Big(\dfrac{2}{s_{0}^2+1}-1\Big).
\end{array}
\end{equation*}
If
\begin{equation*}
\begin{array}{ll}
K_{0}=& 2 \beta  \Big(2 \xi ^2 (v_1^- b+v_1^+)-\alpha  b \xi
(b_{11}^-+b_{22}^-)+\alpha  b \beta  (b_{11}^++b_{22}^+)\Big),\vspace{0.2cm} \\
K_{1}=& \alpha  b \xi  (b_{11}^-+b_{22}^-),\vspace{0.2cm} \\
K_{2}=& - b \alpha\beta ^2 (b_{11}^++b_{22}^+),
\end{array}
\end{equation*}
then $\widetilde{M}_{1}$ can be rewritten as
\begin{equation*}
\widetilde{M}_{1}(s_0)= K_{0} s_{0}+K_{1} \Big(\beta ^2
s_{0}^2+1\Big) \Big(\arccos\Big(\dfrac{2}{\beta ^2
s_{0}^2+1}-1\Big)-2 \pi \Big)+K_{2} \Big(s_{0}^2+1\Big)
\arccos\Big(\dfrac{2}{s_{0}^2+1}-1\Big).
\end{equation*}
Note that $\widetilde{M}_{1}(s_0)$ is the linear combination
\begin{equation*}
\widetilde{M}_{1}(s_0)=K_{0} f_{0}(s_0) -2\pi K _{1} f_{1}(s_0) +
K_{2} f_{2}(s_0)+  K_{1} f_{3}(s_0)
\end{equation*}
of the functions
\begin{equation*}\label{fi}
\begin{array}{lll}
f_{0}(s_0) & = & g_{0}\vspace{0.2cm}\\
f_{1}(s_0) & = & 1 + \beta ^2 s_{0}^2\vspace{0.2cm}\\
f_{2}(s_0) & = & \left(s_{0}^2+1\right) \arccos\left(\dfrac{2}
{s_{0}^2+1}-1\right)\vspace{0.2cm}\\
f_{3}(s_0) & = & (\beta ^2 s_{0}^2+1)\arccos\Big(\dfrac{2}{\beta ^2
s_{0}^2+1}-1\Big).
\end{array}
\end{equation*}
Denoting $W_{k}(s_{0})=W_{k}(f_{0}, f_{1}, . . . , f_{k})(s_{0})$, we
have
\begin{equation*}\label{Wi}
\begin{array}{lll}
W_{0}(s_0) & = & s_{0}\vspace{0.2cm}\\
W_{1}(s_0) & = & \beta ^2 s_{0}^2-1\vspace{0.2cm}\\
W_{2}(s_0) & = & 2 \left(\beta ^2-1\right) \arccos\left(\dfrac{2}
{s_{0}^2+1}-1\right)-\dfrac{4 \left(\beta ^2+1\right) s_{0}}
{s_{0}^2+1}\vspace{0.2cm}\\
W_{3}(s_0) & = & \dfrac{16 \beta ^3 \left(\beta ^2-1\right) \left(2
s_{0} \left(s_{0}^2-1\right)+\left(s_{0}^2+1\right)^2
\arccos\left(\dfrac{2}{s_{0}^2+1}-1\right)\right)}{\left(s_{0}^2+
1\right)^2 \left(\beta ^2 s_{0}^2+1\right)^2}.
\end{array}
\end{equation*}
Observe that the functions $W_{k}(s_{0})$ for $k=0,1,2,3$ have not
roots if $s_{0}>1/\beta$. In fact, we have that
\[
W_3(s_0)\left(\beta ^2 s_{0}^2+1\right)^2 = \widetilde{W_3}(s_0),
\]
where
\[
\widetilde{W_3}(s_0)= \dfrac{16 \beta ^3 \left(\beta ^2-1\right)
\left(2 s_{0} \left(s_{0}^2-1\right)+\left(s_{0}^2+1\right)^2
\arccos\left(\dfrac{2}{s_{0}^2+1}-1\right)\right)}{\left(s_{0}^2+1\right)^2}.
\]
Computing the derivative of $\widetilde{W_3}(s_0)$ we have

\begin{equation*}
\dfrac{d\widetilde{W_3}}{d s_0}=\dfrac{256 \beta ^3 \left(\beta
^2-1\right) s_{0}^2}{\left(s_{0}^2+1\right)^3},
\end{equation*}
which is strictly positive for all $s_{0}\neq 0$ and $\beta > 1$, and
strictly negative for all $s_{0}\neq 0$ and $0<\beta < 1$. Therefore,
since $\lim_{s_{0}\rightarrow 0} \widetilde{W_3}(s_0) = 0$ and
$\lim_{s_{0}\rightarrow\infty} \widetilde{W_3}(s_0) = 0$, the
function $\widetilde{W_{3}}$ has no roots for $s_{0} > 0$. Hence, the
function $W_{3}$ has no roots if $s_{0} > 0$ and $\beta\neq 1$.

In summary, the ordered set $\mathcal{F}=[f_{0}, f_{1}, f_{2},f_{3}]$
is an ET-Chebyschev System. By Theorem \ref{teorema-ECT} we conclude
that there exists a linear combination of the functions of
$\mathcal{F}$ with at most three roots. Thus, the upper bound for the
number of zeros of any function in the linear space of functions
generated by the functions of $\mathcal{F}$ is three. In Example
\ref{exemplo-costura} of Section \ref{exemplos} we show that this
upper bound for the zeros is reached.
\end{proof}

\begin{lemma}
The highest amplitude limit cycle, when it exists and is hyperbolic, is stable
$($resp. unstable$)$ provided that
$\xi(b_{11}^-+b_{22}^-)+b_{11}^++b_{22}^+<0$ $($resp.
$\xi(b_{11}^-+b_{22}^-)+b_{11}^++b_{22}^+>0)$. The lowest amplitude
limit cycle, when it exists and is hyperbolic, is asymptotically stable $($resp. unstable$)$
provided that $b_{11}^-+b_{22}^-<0$ or $b_{11}^-+b_{22}^-=0$ and $b
v_1^-+v_1^+>0$ $($resp. $b_{11}^-+b_{22}^->0$ or
$b_{11}^-+b_{22}^-=0$ and $b v_1^-+v_1^+<0)$.
\end{lemma}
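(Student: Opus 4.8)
The plan is to determine the stability of each hyperbolic crossing limit cycle by the sign of the derivative of the first-return (Poincaré) map at the corresponding fixed point, and to translate this into a sign condition on the Melnikov function $M_1$. Since the displacement map is $M(y_0,\e)=\e M_1(y_0)+\CO(\e^2)$ and a simple zero $y_0^*$ of $M_1$ corresponds to a hyperbolic limit cycle (by Lemma \ref{mel} and Lemma \ref{[lema2]}), the stability is governed by the sign of $M_1'(y_0^*)$: the cycle is stable when the Poincaré map is contracting, which at first order in $\e$ amounts to a sign condition on $\e\, M_1'(y_0^*)$. Thus my first step would be to make precise the correspondence ``sign of $M_1'$ at a simple zero $\Longleftrightarrow$ stability'', using the fact that the full return map $\varphi_Z=\gamma_{Z^-}\circ\gamma_{Z^+}$ is built from the half-return maps computed in Lemma \ref{mel}.

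\textbf{Highest amplitude cycle.} For the largest limit cycle I would use the study of the infinity carried out in Subsection \ref{secao-estudo-infinito}. There the displacement map near infinity was shown to have leading term
\begin{equation*}
\rho(\rho_0)=-\tfrac{1}{2}\e\pi\rho_0\Big(b_{11}^-+b_{22}^-+\tfrac{b_{11}^++b_{22}^+}{\xi}\Big)+\CO(\e^2),
\end{equation*}
so that the infinity is a stable (resp. unstable) periodic orbit exactly when $\xi(b_{11}^-+b_{22}^-)+b_{11}^++b_{22}^+>0$ (resp. $<0$). The key observation is that, in the region beyond the highest amplitude limit cycle, no further limit cycles occur, so the outermost cycle and the infinity bound a common annulus and must therefore have \emph{opposite} stability. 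Hence the outermost cycle is stable precisely when the infinity is unstable, i.e. when $\xi(b_{11}^-+b_{22}^-)+b_{11}^++b_{22}^+<0$, which is statement $(b)$ of Theorem \ref{t1}. I would make this rigorous by noting that between the outermost cycle and infinity the Poincaré map has no fixed point, so its graph stays on one side of the diagonal, forcing the asymptotic behaviour to be opposite at the two ends of the annulus.

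\textbf{Lowest amplitude cycle.} For the innermost cycle the relevant boundary of the annulus is the origin region rather than infinity, so I would instead analyse $M_1(y_0)$ as $y_0\to 0^+$. Expanding the expression \eqref{M1} for small $y_0$ (equivalently, expanding $\widetilde M_1(s_0)$ for small $s_0$, using $\arccos(2/(s_0^2+1)-1)\sim 2s_0$) gives the leading behaviour of $M_1$ near the center-type inner boundary. The sign of $M_1$ on a right-neighbourhood of $0$, combined with $M_1'$ at the lowest simple zero, determines whether the innermost cycle attracts or repels orbits coming from inside. I expect the dominant term to reproduce $b_{11}^-+b_{22}^-$, and when this trace vanishes the next-order term to produce the quantity $b v_1^-+v_1^+$, yielding exactly the dichotomy in statement $(c)$.

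\textbf{Main obstacle.} The delicate point is the stability \emph{sign bookkeeping}: one must keep careful track of the orientation of time and of the two half-return maps (the $\Sigma^-$ flow runs with positive time, the $\Sigma^+$ flow with negative time, as in Lemma \ref{mel}), so that the correspondence between the sign of $M_1'$, the contraction/expansion of $\varphi_Z$, and the geometric notion of stability is unambiguous. In particular the ``opposite stability'' argument for the highest amplitude cycle relies essentially on the fact, already established in Lemma \ref{[lema2]}, that $M_1$ has no zeros beyond the outermost one, so that the sign of $M_1$ (and not merely of $M_1'$) is constant on $(y_0^{\max},\infty)$; matching this sign with the sign of the displacement at infinity is where the argument could most easily go wrong and where I would be most careful.
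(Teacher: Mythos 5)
Your proposal follows essentially the same route as the paper: the paper reads off the stability of the two extreme cycles from the sign of $M_1$ at the two ends of its domain, computing $\lim_{s_0\to 0}\widetilde{M}_{1}(s_0)=-2\pi\alpha b\xi(b_{11}^-+b_{22}^-)$ (with the next-order term producing $b v_1^-+v_1^+$ when the trace vanishes, as you anticipate) and $\lim_{s_0\to\infty}M_{1}(s_0)=\xi(b_{11}^-+b_{22}^-)+b_{11}^++b_{22}^+$, which is exactly the quantity your infinity/opposite-stability argument isolates. The only difference is presentational: the paper evaluates the limit of $M_1$ at infinity directly inside the lemma rather than routing through the Bendixson displacement map and the no-fixed-point annulus argument, and it simply states (rather than derives) the sign convention relating the sign change of $M_1$ at a simple zero to stability, the same bookkeeping point you flag as the delicate step.
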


\begin{proof}
In what follows we study the stability of the crossing limit cycles.
In fact, this stability depends of the sign of $M_{1}$. Indeed,
$M_{1}(y_0) = M_{1}^-(y_0) - M_{1}^+(y_0)$, let
$y_{0}^{\ast}\in\mathbb{R}^{+}$ such that $M_{1}(y_0) = 0$, if
$M_{1}(y_{0}) > 0 $ for $y_{0}<y_{0}^{\ast}$ and $M_{1}(y_{0}) < 0$
for $y_{0}>y_{0}^{\ast}$, then the crossing limit cycle defined by
$y_0$ is unstable. If these signs are reversed then it is stable.

We get that
\begin{equation*}
\displaystyle\lim_{s_0\rightarrow 0} \widetilde{M}_{1}(s_0)=-2
\pi\alpha  b \xi  (b_{11}^-+b_{22}^-), \,\,\, \mbox{and}\,\,\,
\displaystyle\lim_{s_0\rightarrow \infty}
M_{1}(s_0)=\xi(b_{11}^-+b_{22}^-)+b_{11}^++b_{22}^+.
\end{equation*}
Therefore, since $b<0,\, \alpha > 0$ and $\xi>0$ the
$\sgn\Big(\displaystyle\lim_{s_0\rightarrow 0}
\widetilde{M}_{1}(s_0)\Big)=\sgn(b_{11}^-+b_{22}^-)$, and $2 b \beta
s_{0} \xi ^2 m_{11}^-(s_0)=\widetilde{M}_{1}(s_0)$ so
$\sgn\Big(\displaystyle\lim_{s_0\rightarrow 0}
M_{1}(s_0)\Big)=-\sgn(b_{11}^-+b_{22}^-)$, see Figure
\ref{GraficoRepelerCrossingCycleM1full}. Thus, the lemma follows.
\end{proof}

\begin{figure}[h]
\begin{center}
\begin{overpic}[width=10cm]{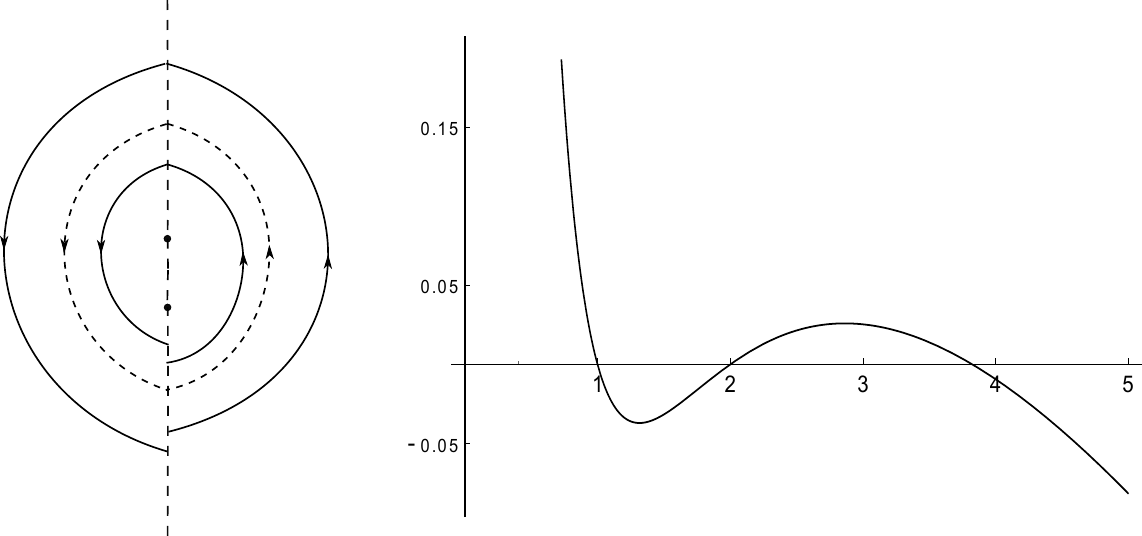}
\put(10,45){$\Sigma$}\put(16,37){$y_0^*$}
\end{overpic}
\end{center}
\caption{The repealer crossing limit cycle and the graphic of
$M_{1}(y_0)$ 1-order linear perturbation of a piecewise linear
center.}\label{GraficoRepelerCrossingCycleM1full}
\end{figure}

Now, we complete the proof of Theorem \ref{t1} analyzing the
stability of the crossing limit cycles and of the periodic orbit at
infinity. In fact, the stability of the periodic orbit at infinity is
given by $\sgn(\xi (b_{11}^-+b_{22}^-) + b_{11}^+  + b_{22}^+)$, see
Section \ref{secao-estudo-infinito}, and the stability of the
crossing limit cycle (c.l.c.) of the biggest amplitude limit cycle is
given in Table \ref{tabela-estab-costura}.

\begin{table}[htb]
\centering
\begin{tabular}{|c|c|c|}
\hline sign$(\xi (b_{11}^-+b_{22}^-) + b_{11}^+  + b_{22}^+)$ &
stability
of the bigger c.l.c & stability of $\infty$ \\
\hline \hline $-1$ & asymptotically stable & unstable  \\ \hline $1$ & unstable &
stable\\ \hline
\end{tabular}
\smallskip
\caption{Stability of the highest amplitude crossing limit cycle and
the infinity.} \label{tabela-estab-costura}
\end{table}

\subsection{Proof of Theorem \ref{t2}}\label{secao-ciclosliding}

In order to study the sliding/escaping limit cycle we consider the
second order linear perturbation of the vector field $Z$ given in
\eqref{p1s}. The singular points of the systems in
$\Sigma^-$ and $\Sigma^+$ are given by $p^{-}=(-e,0)$ and $p^{+}=(-b
d/(a^2+b c),-d/(a^2+b c))$, respectively. The eigenvalues of the
unperturbed piecewise linear vector field $Z_{2, 0}(X)$ in $\Sigma^-$
and $\Sigma^+$ are given by $Spec^{-}=\{i,-i\}$ and
$Spec^{+}=\left\{-\sqrt{a^{2}+b c},\sqrt{a^{2}+b c}\right\}$,
respectively. Consider $Z_{\e}$ given in \eqref{p1s} and assume that
$b_{11}^-=-b_{22}^-$, the fold point in $\Sigma^-$ and $\Sigma^+$ are
given by
\[
\begin{array}{ll}
y_{f1} =    &v_1^-\e + (A_{0}+v_1^-a_{2})\e^{2} +\mathcal{O}(\e^{3})
\, \,  \mbox{and}
\vspace{0.2cm}\\
y_{f2}  =  &-\dfrac{v_1^+\e}{b}+\dfrac{(-b C_{0}+v_1^+
c_{2})\e^{2}}{b^{2}}+\mathcal{O}(\e^{3}),
\end{array}
\]
respectively. Under the assumptions $e>0$ and $db<0$ we get that
$y_{f1}$ is a visible fold point and $y_{f2}$ is an invisible fold
point. Assuming that $v_1^- > -\dfrac{v_1^+}{b}$ we have that
$y_{f1}$ is over of $y_{f2}$ and by \eqref{eq campo filippov} the
expression of the sliding vector fields is
\[
\begin{array}{ll}
Z_{\e}^{s}(0,y) = & y (a y-b e-d) +\e  (v_1^- (d-a y)-y (y (a a_{2}+
b b_{22}^-+b_{22}^+)-a_{2} d+b v_2^-+c_{2} e+d_{0})\vspace{0.2cm}\\
& -v_1^+ e) + \e ^2\Big(-y (a A_{0}-v_1^- b_{22}^+-a_{2} d_{0}-A_{2}
d+b w_2^-+v_2^- c_{2}+b_{22}^- v_1^++C_{2} e \vspace{0.2cm} \\
&+D_{0})  +y^2 (-(a A_{2}-a_{2} b_{22}^++b c_{22}^-+b_{22}^-
c_{2}+D_{2}))+
 v_1^- d_{0}+A_{0} d-v_2^- v_1^+-C_{0} e\Big).
\end{array}
\]
Any point in the sliding region is given as a convex combination of
$y_{f1}$ and $y_{f2}$ as follows
\[
y_s (\lambda) =(1-\lambda)y_{f1} + \lambda y_{f2}=\e \Big(v_1^- -
v_1^- \lambda - \frac{v_1^+ \lambda}{b}\Big),
\]
where $0<\lambda<1$. A necessary condition for the existence of a
sliding/escaping limit cycle is that the sliding vector field is
regular and points toward the visible fold point $y_{f1}$. The
pseudo-equilibrium is  $(0,y^*)$ with $ y^*=(d+be)/a$, which is
reached when $\lambda=\lambda^*=-b(d + b e)/(a(v_1^-b+v_1^+)\e)$.
Under the hypotheses $a<0$ and $d+be > 0$, we obtain that $(0,y^*)
\notin \Sigma^s$, i.e., $Z_{\e}^s$ is regular. The direction of the
sliding vector field is given by the sign of the derivative of $Z_{
\e}^s$ evaluated at $y_s$, that is by $(v_1^- b+v_1^+) (b e+d)/b$.
From assumptions $(d+ b e)>0$ and $(v_1^- b + v_1^+)<0$ ($(v_1^- b +
v_1^+)>0$ resp.), we conclude that the sliding (escaping resp.)
vector field points towards $y_{f1}$ ($y_{f2}$ resp.).

In what follows we study the return maps passing through the fold
point of $Z_{\e}^{+}$ and $Z_{\e}^{-}$. Our goal is to provide an
order relation between the images by the flow of the fold points in a
transverse section through $y_{f1}$. This analysis not only provides
a necessary condition for the existence of a sliding/escaping limit
cycle, but also provides its distinct topological type.  The negative
half return map in a neighborhood of the invisible fold point
$y_{f2}$ defines the involution $\gamma_{Z^+_{\e}}: I^- \rightarrow
I^+$, where $I^{+}, I^-$ is an open interval above, below, resp., of
$y_{f2}$. For more details about the construction of this involution,
see \cite{T1}. In this way, we have that $
\gamma_{Z^+_{\e}}^{-1}(y_{f1})=y_{f3}\in I^-$. The line $\{(x,y);x =
0\}$ is tangent to the fold points. Therefore we cannot use the
Implicit Function Theorem in this case. However, we can obtain a
condition for the existence of a sliding/escaping limit cycle
studying the intersection of the trajectories of $Z_{\e}$ in
$\Sigma^-$ with initial conditions at $y_{f1}, y_{f2}$ and $y_{f3}$
with the line $\Lambda=\{(x,y); x\leq 0, y = y_{f1}\} \subset
\Sigma^-$, which is a transversal section at the fold point $y_{f1}$.
Considering the smooth vector field $Z^-_{\e}$ and the initial
conditions $(0,y_{f1})$, $(0,y_{f2})$ and $(0,y_{f3})$ the
intersection of the flow of $Z^-_{\e}$ with $\Lambda$ define the
return maps $S_0(\e)$, $S_1(\e)$, $S_2(\e)$ and $S_3(\e)$,
respectively, given by
\[
\begin{array}{l}
\begin{array}{ll}
S_0(\e) = &-2 e+\e  (2 b_{21}^- e-2 v_2^-)+\e ^2 \Big(\dfrac{1}{2}
\pi  e (c_{11}^-+c_{22}^-)-2 \Big( v_1^- b_{22}^- - v_2^- b_{21}^- + w_2^- \\
&+ (b_{21}^-)^2 e - c_{21}^- e\Big)\Big)+\mathcal{O}(\e^3),
\end{array}\\
\begin{array}{ll}
S_{1}(\e)  = &-2 e+\e  (2 b_{21}^- e-2 v_2^-)+\e^2 \Big(-2 \Big(v_1^
- b_{22}^- - v_2^- b_{21}^- + w_2^- + (b_{21}^-)^2 e - c_{21}^- e\Big)\\
&-\dfrac{1}{2} \pi  e (c_{11}^-+c_{22}^-)\Big)+\mathcal{O}(\e^3),
\end{array}
\\
\begin{array}{ll}
S_{2}(\e) = & -2 e +\e  (2 b_{21}^- e-2 v_2^-) -\dfrac{\e ^2 }{2 b^2
e} \Big((v_1^-)^2 b^2+2 v_1^- b (2 b b_{22}^- e+v_1^+)+b^2 e \Big(4
\Big(
-v_2^- b_{21}^- \vspace{0.2cm}\\
&+w_2^- + (b_{21}^-)^2 e -c_{21}^- e\Big)-\pi  e
(c_{11}^-+c_{22}^-)\Big)+(v_1^+)^2 \Big)+\mathcal{O}(\e^3),
\end{array}
\\
\begin{array}{ll}
S_{3}(\e) = & -2 e +\e  (2 b_{21}^- e-2 v_2^-) + \dfrac{\e^2 }{2 b^2 e}\Big( -4 b^2 e (v_1^- b_{22}^- - v_2^- b_{21}^-+v_2^-)-4 (v_1^- b+v_1^+)^2\\
&+\pi  b^2 e^2 (c_{11}^- + b_{22}^-) + 4 b^2 e^2 \left(c_{21}^- - (b_{21}^-)^2\right) \Big) +\mathcal{O}(\e^3).
\end{array}
\end{array}
\]
In this way, we conclude:
\begin{itemize}
\item[$(1-a)$]  If $0<c_{11}^-+c_{22}^- < (v_1^- b+v_1^+)^2/(
2 b^2 e^2 \pi)$, then $S_3 < S_2 < S_1 < S_0$ and system $Z_{ \e}$
admits a sliding cycle of Type I.

\item[$(1-b)$] If  $(v_1^- b+v_1^+)^2)/(2 b^2 e^2 \pi)<c_{11}^-
+c_{22}^-<2 (v_1^- b+v_1^+)^2/( b^2 e^2 \pi)$, then $S_3 < S_1 < S_2
< S_0$, and system $Z_{ \e}$ admits a sliding cycle of Type II.
\end{itemize}
\begin{figure}[h]
\begin{center}
\begin{overpic}[width=5cm]{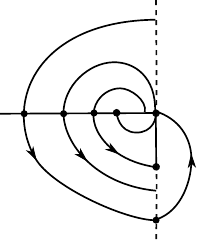}
\put(67,54){$y_{f1}$}\put(67,30){$y_{f2}$}\put(67,5){$y_{f3}$}
\put(60,-2){$\Sigma$}\put(3,55){$S_3$}\put(20,55){$S_1$}\put(33,55)
{$S_2$}\put(43,55){$S_0$}\put(-2,47){$\Lambda$}
\end{overpic}
\end{center}
\caption{The point $y_{f3}$ and the return maps $S_0(\e),S_1(\e),
S_2(\e)$ and $S_3(\e)$.}\label{S1S2S3}
\end{figure}
Working similarly and assuming that $a<0, d+be > 0$ and $v_1^- b +
v_1^+>0$ we can conclude that
\begin{itemize}
\item[$(2-a)$]  If $-(v_1^- b+v_1^+)^2)/(2 b^2 e^2 \pi)
< c_{11}^-+c_{22}^- < 0$, then system $Z_{ \e}$ admits a escaping
cycle of Type I.

\item[$(2-b)$] If $c_{11}^-+c_{22}^- < -(v_1^- b+v_1^+)^2)/
(2 b^2 e^2 \pi)$, then system $Z_{ \e}$ admits a escaping cycle of
Type II.
\end{itemize}
When the assumption $b_{11}^-=-b_{22}^-$ does not hold we see that series around $\e=0$ of $S_0(\e),$ $S_2(\e),$ and $S_3(\e)$ coincide up to order $1$ and writes $-2e + \e \Big(\frac{1}{2} (\pi  e (b_{11}^- + b_{22}^-) - 4 v_2^- + 4 b_{21}^- e)\Big) + \CO(\e^2)$. On the other hand, $S_1(\e)= -2e + \e\Big(\frac{1}{2} (-\pi  e (b_{11}^- + b_{22}^-) - 4 v_2^- + 4 b_{21}^- e) \Big)  + \CO(\e^2)$. In this case, for $\e>0$ sufficiently small, either $S_1(\e)> S_0(\e)$ or $S_1(\e)<S_3(\e)$. Consequently, there is no sliding limit cycle when $b_{11}^-\neq-b_{22}^-$ .

This completes the proof of Theorem \ref{t2}.

\subsection{Proof of Theorem \ref{t3}}\label{secao-simultaniedade}

We divide the proof in two steps. In the first we provide an upper
bound for the number of crossing limit cycles, and in the second we
study the stability of these limit cycles.

\begin{lemma}
Assuming that $b_{11}^-=-b_{22}^-$ the function $M_{1}$, given in
\eqref{M1}, has at most one simple zero. Moreover, there exists
choice of parameters for which this upper bound is reached.
\end{lemma}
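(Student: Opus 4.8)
The plan is to specialize the analysis carried out in the proof of Lemma \ref{[lema2]} to the constraint $b_{11}^-=-b_{22}^-$ and to exploit a collapse that it produces. Recall that after the rescaling $y_0=\alpha s_0/\xi$, $e=\alpha/\xi$, $d=\alpha\beta$, the positive zeros of $M_1$ coincide with those of $\widetilde{M}_1(s_0)=K_0 f_0(s_0)-2\pi K_1 f_1(s_0)+K_2 f_2(s_0)+K_1 f_3(s_0)$, where $f_0(s_0)=s_0$ and the remaining $f_j$ are as listed there. Since $K_1=\alpha b \xi (b_{11}^-+b_{22}^-)$, imposing $b_{11}^-=-b_{22}^-$ forces $K_1=0$, so $\widetilde{M}_1$ reduces to the two-term combination $\widetilde{M}_1(s_0)=K_0 f_0(s_0)+K_2 f_2(s_0)$; note that the two surviving basis functions are independent of $\beta$.

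First I would reduce the bound to showing that $[f_0,f_2]$ is an ECT-system on $(0,\infty)$. The key simplification is the identity $\arccos\big(2/(s_0^2+1)-1\big)=2\arctan(s_0)$, valid for $s_0>0$, which rewrites $f_2(s_0)=2(s_0^2+1)\arctan(s_0)$ and renders the Wronskians elementary. One has $W_0(s_0)=f_0(s_0)=s_0\neq 0$ on $(0,\infty)$, and a direct computation gives $W_1(s_0)=W(f_0,f_2)(s_0)=2\big[(s_0^2-1)\arctan(s_0)+s_0\big]$. Nonvanishing of $W_1$ follows from the auxiliary function $h(s_0)=(s_0^2-1)\arctan(s_0)+s_0$: here $h(0)=0$ and $h'(s_0)=2s_0\arctan(s_0)+2s_0^2/(1+s_0^2)>0$ for $s_0>0$, so $h$ is strictly increasing and hence strictly positive on $(0,\infty)$. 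With $W_0,W_1$ both nonvanishing, the Wronskian criterion shows $[f_0,f_2]$ is an ECT-system, so every nontrivial element of its span has at most one zero counting multiplicity. Since $\widetilde{M}_1\in\mathrm{Span}\{f_0,f_2\}$ and $y_0\mapsto s_0$ is an increasing diffeomorphism of $(0,\infty)$, $M_1$ has at most one simple zero.

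For the sharpness, I would observe that for $s_0>0$ the equation $\widetilde{M}_1(s_0)=0$ is equivalent, after dividing by $s_0$, to $g(s_0):=(s_0^2+1)\arctan(s_0)/s_0=-K_0/(2K_2)$. The same computation yields $g'(s_0)=h(s_0)/s_0^2>0$, while $g(0^+)=1$ and $g(+\infty)=+\infty$, so $g$ is a strictly increasing bijection from $(0,\infty)$ onto $(1,\infty)$. Hence for any prescribed value $-K_0/(2K_2)>1$ there is exactly one $s_0^*>0$ solving the equation, and $g'(s_0^*)\neq 0$ makes the zero simple. It remains to realize such a pair $(K_0,K_2)$: with $K_1=0$ fixed by the constraint, the map sending the free parameters to $(K_0,K_2)$ is still surjective (solve $K_2=-b\alpha\beta^2(b_{11}^++b_{22}^+)$ for $b_{11}^++b_{22}^+$, then recover $v_1^- b+v_1^+$ from $K_0$, using $b,\alpha,\beta,\xi\neq 0$), so a choice with $-K_0/(2K_2)>1$ exists and produces exactly one simple zero.

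The hard part is pinpointing the right simplification rather than any lengthy computation: recognizing that the constraint annihilates $K_1$ and, with it, the two $\beta$-dependent functions $f_1,f_3$, and then using the $\arccos$–$\arctan$ identity so that positivity of the second Wronskian reduces to the elementary monotonicity of $h$. Once these are in place, the remaining steps are routine bookkeeping.
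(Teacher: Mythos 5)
Your proof is correct and follows essentially the same route as the paper: the constraint $b_{11}^-=-b_{22}^-$ collapses the Melnikov function to a two-dimensional span of $s_0$ and $(s_0^2+1)\arccos\big(2/(s_0^2+1)-1\big)=2(s_0^2+1)\arctan(s_0)$, and the nonvanishing of the single nontrivial Wronskian (your positivity of $h(s_0)=(s_0^2-1)\arctan(s_0)+s_0$ is the same elementary monotonicity computation the paper performs on $W_1/(s_0^2-1)$) yields at most one simple zero. Your sharpness argument via the strictly increasing bijection $g:(0,\infty)\to(1,\infty)$ and the surjectivity of $(K_0,K_2)$ is a bit more self-contained than the paper's, which defers the realization to an explicit example, but the approach is the same.
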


\begin{proof}
Consider $M_1(y_0)$ given in \eqref{M1} and assume that
$b_{11}^-=-b_{22}^-$, then we obtain
\begin{equation}\label{M1INF}
M_1(y_0)= 2 v_1^-+ \dfrac{2 v_1^+}{b}+\dfrac{d (b_{11}^+ +
b_{22}^+)}{\xi ^2} -\dfrac{(b_{11}^++b_{22}^+) \left(d^2+\xi ^2
y_{0}^2\right) \arccos\left(\frac{2 d^2}{d^2+\xi ^2
y_{0}^2}-1\right)}{2 y_0 \xi ^3}.
\end{equation}
Proceeding with the change of coordinates $y_0= d s_0/\xi$ and
writing $k_0=2 v_1^-+2 v_1^+/b+d (b_{11}^++b_{22}^+)/\xi ^2$ and
$k_1=-d (b_{11}^++b_{22}^+)/(2 \xi ^2)$ we get
\[
\dfrac{d}{\xi} s_0 M_1 (s_0)= k_{0} s_{0}+ k_{1}
\left(s_{0}^2+1\right) \arccos\left(1-\frac{2}{s_{0}^2+1}\right).
\]
Thus, denoting
\begin{equation*}
\begin{array}{lll}
f_{0}(s_0) & = & s_{0}\\
f_{1}(s_0) & = & \left(s_{0}^2+1\right)
\arccos\left(1-\dfrac{2}{s_{0}^2+1}\right),
\end{array}
\end{equation*}
and computing their Wronskians we obtain
\begin{equation*}
\begin{array}{lll}
W(f_0)(s_0) & = & 1\\ \\
W(f_0,f_1)(s_0) & = & -2 s_{0} + (s_{0}^2-1)
\arccos\left(1-\dfrac{2}{s_{0}^2+1}\right).
\end{array}
\end{equation*}
Let $\widetilde{W}_1(s_0)=W_1/(s_{0}^2-1)$. Therefore
$\dfrac{d\widetilde{W}_1}{ds_0}(s_0)=8
s_{0}^2/(\left(s_{0}^2-1\right)^2 \left(s_{0}^2+1\right))$ which is
strictly positive for all $s_0>0$. Thus, $\widetilde{W}_1$ is strictly
increasing and $W_1$ has at most one zero. The existence of a sliding
limit cycle follows from Theorem \ref{t2}. In Example
\ref{exemplo-sliding} we present a piecewise linear vector field that
exhibits a crossing/sliding limit cycle.
\end{proof}

In the previous case the stability of the crossing limit cycle is
given by the following result.

\begin{lemma}\label{lema-est-ciclo-costura-desl}
The crossing limit cycle of \eqref{p1s} is unstable (resp. stable) if
$v_1^- + \frac{v_1^+}{b} > 0$ $($resp. $v_1^- + \frac{v_1^+}{b} <
0)$.
\end{lemma}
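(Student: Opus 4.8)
The plan is to read off the stability of the unique crossing limit cycle from the sign of the first order Melnikov function $M_1$ on the two sides of its zero, exactly as in the stability analysis carried out in the proof of Theorem \ref{t1}. Under the standing assumption $b_{11}^-=-b_{22}^-$, the previous lemma guarantees that $M_1$, in the form \eqref{M1INF}, has at most one positive simple zero; I assume this zero $y_0^\ast$ exists, so that it produces a hyperbolic crossing limit cycle which, being the only one, is simultaneously the lowest and the highest amplitude cycle. Since $y_0^\ast$ is simple, $M_1$ changes sign exactly once on $(0,\infty)$, so its sign on the unbounded branch $y_0>y_0^\ast$ is constant and equals the sign of $\lim_{y_0\to\infty}M_1$. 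The stability criterion then reduces everything to the computation of this single limit.

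To compute it I would use the same reduction as in the previous lemma: setting $y_0=d\,s_0/\xi$ and multiplying by the positive factor $\tfrac{d}{\xi}s_0$ turns $M_1$ into the combination $k_0 s_0+k_1\bigl(s_0^2+1\bigr)\arccos\bigl(1-2/(s_0^2+1)\bigr)$, with $k_0=2v_1^-+2v_1^+/b+d(b_{11}^++b_{22}^+)/\xi^2$ and $k_1=-d(b_{11}^++b_{22}^+)/(2\xi^2)$. As $s_0\to\infty$ the elementary estimate $\arccos(1-u)\sim\sqrt{2u}$ with $u=2/(s_0^2+1)$ gives $\bigl(s_0^2+1\bigr)\arccos(\cdots)\sim 2s_0$, so the leading behaviour is $(k_0+2k_1)s_0$. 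The decisive point is the cancellation $k_0+2k_1=2\bigl(v_1^-+v_1^+/b\bigr)$: the combination $b_{11}^++b_{22}^+$ drops out and $M_1$ tends to $\tfrac{2\xi}{d}\bigl(v_1^-+v_1^+/b\bigr)$, a positive multiple of $v_1^-+v_1^+/b$. Hence $\sgn\bigl(\lim_{y_0\to\infty}M_1\bigr)=\sgn\bigl(v_1^-+v_1^+/b\bigr)$, and the criterion yields that the cycle is unstable when $v_1^-+v_1^+/b>0$ and stable when $v_1^-+v_1^+/b<0$.

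As a cross-check, and in fact a shorter route, I would note that the same conclusion follows immediately from Theorem \ref{t1}$(c)$. Indeed the unique crossing cycle here is in particular the lowest amplitude cycle, and specialising part $(c)$ to $b_{11}^-+b_{22}^-=0$ leaves the condition $b v_1^-+v_1^+\gtrless 0$; since $b<0$, dividing by $b$ reverses the inequality and rewrites it as $v_1^-+v_1^+/b\lessgtr 0$, which is exactly the claimed dichotomy. One may likewise verify consistency with part $(b)$: for $b_{11}^-+b_{22}^-=0$ its criterion reduces to $\sgn(b_{11}^++b_{22}^+)$, and the existence of a single simple zero forces $\sgn(b_{11}^++b_{22}^+)=\sgn(v_1^-+v_1^+/b)$, so both descriptions of this one cycle agree.

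The step I expect to require the most care is the sign and orientation bookkeeping rather than any hard analysis: one must align the asymptotic sign of $M_1$ on the outer branch with the correct stability convention (stable precisely when the displacement is negative beyond the outermost cycle), and track the sign of $b<0$ through the reduction. The only genuine computation, namely the asymptotics of the $\arccos$ term and the cancellation producing $v_1^-+v_1^+/b$, is short; everything else is an application of the facts already established for $M_1$.
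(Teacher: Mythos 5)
Your overall strategy (read the stability off the sign of $M_1$ on the two sides of its unique simple zero) is the paper's strategy, and your cross-check via Theorem \ref{t1}$(c)$ is correct and essentially \emph{is} the paper's proof: the paper evaluates $\lim_{y_0\to 0^+}M_1(y_0)=2\bigl(v_1^-+v_1^+/b\bigr)$ directly from \eqref{M1INF} and concludes from the sign of $M_1$ on the inner branch. But your primary route contains a genuine computational error. In \eqref{M1INF} the argument of the $\arccos$ is $\tfrac{2d^2}{d^2+\xi^2y_0^2}-1$, which tends to $-1$ (not $+1$) as $y_0\to\infty$; hence the $\arccos$ tends to $\pi$, there is no cancellation, and $M_1(y_0)\sim-\tfrac{\pi(b_{11}^++b_{22}^+)}{2\xi}\,y_0$ diverges linearly. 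The expansion $\arccos(1-u)\sim\sqrt{2u}$ that you invoke is valid at the \emph{other} end of the interval, $y_0\to 0^+$, and that is precisely where the cancellation you describe (the $b_{11}^++b_{22}^+$ terms dropping out, leaving $2(v_1^-+v_1^+/b)$) actually occurs. You were led astray by the reduced form $k_0s_0+k_1(s_0^2+1)\arccos\bigl(1-2/(s_0^2+1)\bigr)$, whose $\arccos$ argument has the opposite sign to the one in \eqref{M1INF}; the form consistent with \eqref{M1} and with the finite limit at $0^+$ is $\arccos\bigl(2/(s_0^2+1)-1\bigr)=\pi-\arccos\bigl(1-2/(s_0^2+1)\bigr)$, which swaps the asymptotics at $0$ and $\infty$.

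There is a second, compensating discrepancy in your sign bookkeeping. In this paper's convention (see Table \ref{tabela-estab-costura}, Example \ref{exemplo-sliding}, and the Bendixson computation in Subsection \ref{secao-estudo-infinito}), $M_1<0$ beyond the outermost cycle corresponds to the infinity being \emph{stable} and that cycle \emph{unstable} --- the opposite of your stated rule ``stable precisely when the displacement is negative beyond the outermost cycle.'' Because your limit at infinity carries the wrong sign and your stability rule is also reversed relative to the paper's, the two errors cancel and you land on the correct dichotomy; but neither intermediate step is right as written. The clean fix is exactly your fallback: the unique cycle is in particular the lowest-amplitude one, and Theorem \ref{t1}$(c)$ with $b_{11}^-+b_{22}^-=0$ gives stability iff $bv_1^-+v_1^+>0$, i.e.\ iff $v_1^-+v_1^+/b<0$ since $b<0$ --- equivalently, compute $\lim_{y_0\to 0^+}M_1$ as the paper does. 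Your observation that a single simple zero forces $\sgn(b_{11}^++b_{22}^+)=\sgn(v_1^-+v_1^+/b)$ is correct (it follows from comparing the sign at $0^+$ with the sign of the divergent term at infinity), and reconciles parts $(b)$ and $(c)$ as you say.
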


\begin{proof}
The stability of the crossing limit cycle is given by the sign of
$M_{1}$. Indeed, $ M_{1}(y_0) = M_1^-(y_0) - M_1^+(y_0)$, let
$y_{0}^{\ast}\in\mathbb{R}^{+}$ such that $M_{1}(y_0) = 0$,
$M_{1}(y_{0}) > 0 $ for $y_{0}<y_{0}^{\ast}$, and $ M_{1}(y_{0}) < 0
$ for $y_{0}>y_{0}^{\ast}$. Therefore, the crossing limit cycle is
unstable. If $M_{1}(y_{0}) < 0 $ for $y_{0}<y_{0}^{\ast}$ and
$M_{1}(y_{0}) > 0 $ for $y_{0}>y_{0}^{\ast}$, then the crossing limit
cycle is stble. By \eqref{M1INF} we have that
\begin{equation*}
\displaystyle\lim_{y_0\rightarrow 0^+} {M}_{1}(y_0)=2 \left(v_1^- +
\dfrac{v_1^+}{b} \right).
\end{equation*}
Thus, the lemma follows.
\end{proof}


\section{Final remarks and some examples}\label{exemplos}

In this section we provide two examples of piecewise linear vector
fields, the first one admitting three crossing limit cycles and the
second one with a sliding and a crossing limit cycle.

\subsection{Example 1.}\label{exemplo-costura}
Consider the following piecewise linear vector field $Z=(Z^+, Z^-)$
where
\begin{equation*}
\begin{array}{ll}
Z^{-}(x,y)= &\left(\begin{array}{cc}
0 & -1\\
1 & 0
\end{array}\right)\left(\begin{array}{c}
x\\
y
\end{array}\right)
+\left(\begin{array}{c}
0 \\
0.55
\end{array}\right) \\\\
& + \varepsilon\left( \left(\begin{array}{cc}
-1 & 0\\
0 & -1
\end{array}\right)\left(\begin{array}{c}
x\\
y
\end{array}\right)
+\left(\begin{array}{c}
-2.65 \\
0
\end{array}\right)\right)
\end{array}
\end{equation*}
and
\begin{equation*}
\begin{array}{ll}
Z^{+}(x,y)=& \left(\begin{array}{cc}
1 & -1\\
1.01 & -1
\end{array}\right)\left(\begin{array}{c}
x\\
y
\end{array}\right)
+\left(\begin{array}{c}
0 \\
0.1
\end{array}\right) + \varepsilon\left( \left(\begin{array}{cc}
0.21 & 0\\
0 & 0
\end{array}\right)\left(\begin{array}{c}
x\\
y
\end{array}\right)\right).
\end{array}
\end{equation*}
The first order Melnikov function associated to this system is given
by
\begin{equation*}
\begin{array}{ll}
M_1(y_0)=& \dfrac{-1}{400 y_{0}}\Big(-242 \pi -800 \pi  y_{0}^2+
420 \Big(y_{0}^2+1\Big) \arccos\Big(\dfrac{2}{y_{0}^2+1}-1\Big)\\
& +\Big(400 y_{0}^2+121\Big) \arccos\Big(\dfrac{242}{400
y_{0}^2+121}-1\Big)+840 y_{0}\Big),
\end{array}
\end{equation*}
which has three zeros $y_{0}=1, y_{0}=2$ and by the
Newton-Kantorovich method, see \cite{argyros}, we have the third zero
in the neighborhood of $y_{0}=3.82781$, see Figure \ref{GM1}. Each
zero corresponds to a crossing limit cycle with alternating
stability.  In this case, the highest amplitude crossing limit cycle
is unstable and the infinity is stable.
\begin{figure}[h]
\begin{center}
\begin{overpic}[width=8cm]{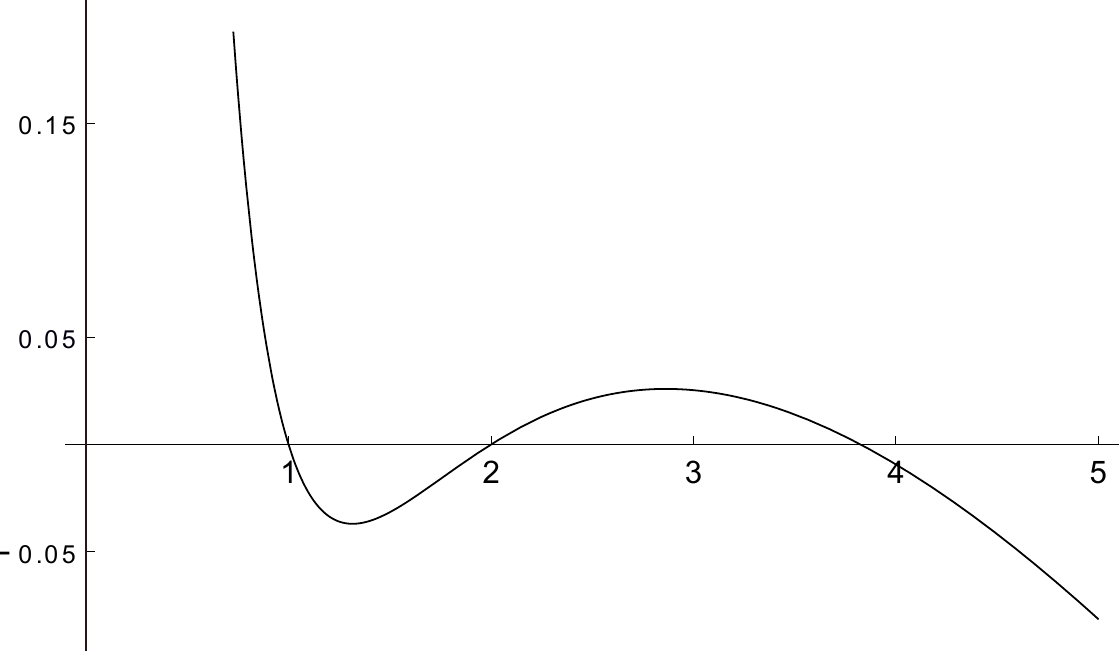}
\end{overpic}
\end{center}
\caption{Graphic of $M_{1}.$ Each zero corresponds to a crossing limit
cycle. }\label{GM1}
\end{figure}

\subsection{Example 2.}\label{exemplo-sliding}
Consider the following piecewise linear vector field $Z=(Z^+, Z^-)$
where
\begin{equation*}
\begin{array}{ll}       Z^{-}(x,y) = &
\left(\begin{array}{cc}
0 & -1\\
1 & 0
\end{array}\right)\left(\begin{array}{c}
x\\
y
\end{array}\right)
        +\left(\begin{array}{c}
            0 \\
            1
        \end{array}\right)
        + \e\left( \left(\begin{array}{cc}
            1 & 0\\
            0 & -1
        \end{array}\right)\left(\begin{array}{c}
            x\\
            y
        \end{array}\right)
        +\left(\begin{array}{c}
            0.2 \\
            0
        \end{array}\right)\right)
        \\\\
&   + \e^{2}\left( \left(\begin{array}{cc}
        0.03 & 0\\
        0 & 0.02
    \end{array}\right)\left(\begin{array}{c}
        x\\
        y
    \end{array}\right)\right),
    \end{array}
    \end{equation*}

    \begin{equation*}
        Z^{+}(x,y)=
        \left(\begin{array}{cc}
            -1 & -1\\
            2  & 1
        \end{array}\right)\left(\begin{array}{c}
            x\\
            y
        \end{array}\right)
        +\left(\begin{array}{c}
            0 \\
            2
        \end{array}\right)
        + \e\left( \left(\begin{array}{cc}
            1.5 & 0\\
            0 & -0.4
        \end{array}\right)\left(\begin{array}{c}
            x\\
            y
        \end{array}\right)
    +\left(\begin{array}{c}
        -0.5 \\
        0
    \end{array}\right)\right).
\end{equation*}
The associated first order Melnikov function is given by
\begin{equation*}
M_{1}(y_0)= 2.2-\left(0.1 y_{0}+\dfrac{1.6}{y_{0}}\right)
\arccos\left(\dfrac{8}{0.25 y_{0}^2+4}-1\right).
\end{equation*}
The graph of $M_{1}$ is given by Figure \ref{GM1S}, the crossing
limit cycle is located in a neighborhood of $y_{0}=7.94622$, is
repealer and the infinite is attractor.
\begin{figure}[h]
\begin{center}
\begin{overpic}[width=7 cm]{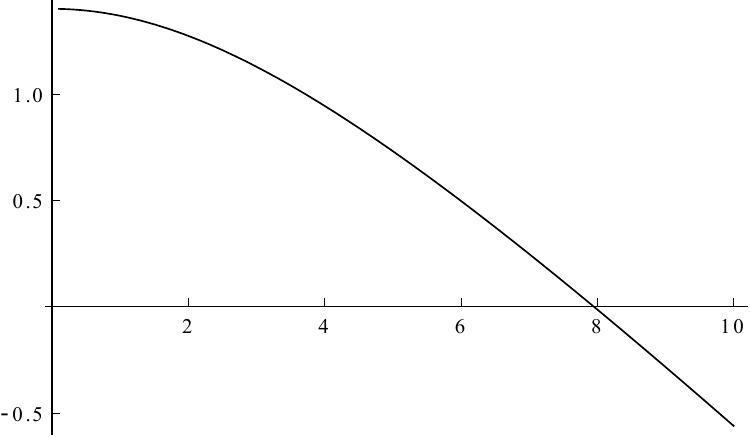}
\end{overpic}
\end{center}
\caption{Graphic of $M_{1}$. The unique zero of $M_1$ corresponds to a
repealer crossing limit cycle.}\label{GM1S}
\end{figure}

\section*{Acknowledgements}
JLC is partially
supported by Goi\'as Research Foundation (FAPEG) and CAPES. JL is partially supported by the Ministerio de Econom\'ia, Industria
y Competitividad, Agencia Estatal de Investigaci\'{o}n grant
MTM2016-77278-P (FEDER), the Ag\`encia de Gesti\'o d'Ajuts
Universitaris i de Recerca grant 2017 SGR 1617, and the European
project Dynamics-H2020-MSCA-RISE-2017-777911. DDN is partially supported by FAPESP grants 2018/16430-8, 2018/ 13481-0, and 2019/10269-3, and by CNPq grant 306649/2018-7 and 438975/ 2018-9.  DJT is partially
supported by PROCAD/CAPES grant 88881.0 68462/2014-01 and by
CNPq-Brazil.

\end{document}